\numberwithin{equation}{section}
\theoremstyle{plain}
\newtheorem{thm}{\protect\theoremname}[section]
\theoremstyle{remark}
\newtheorem{rem}[thm]{\protect\remarkname}
\theoremstyle{plain}
\newtheorem{lem}[thm]{\protect\lemmaname}
\DeclareMathAlphabet{\mathcal}{OMS}{cmsy}{m}{n}
\providecommand{\lemmaname}{Lemma}
\providecommand{\remarkname}{Remark}
\providecommand{\theoremname}{Theorem}
\providecommand{\lemmaname}{Lemma}
\providecommand{\remarkname}{Remark}
\providecommand{\theoremname}{Theorem}
\begin{document}
\title{On vorticity and expansion-rate of fluid flows,\\
 conditional law duality and their representations}
\author{Zhongmin Qian\thanks{Mathematical Institute, University of Oxford, Oxford OX2 6GG, and
Oxford Suzhou Centre for Advanced Research, Suzhou, China. Email:
$\mathtt{qianz@maths.ox.ac.uk}$}\ \ and\ Zihao Shen\thanks{Mathematical Institute, University of Oxford, Oxford OX2 6GG. Email:
$\mathtt{zihao.shen@bnc.ox.ac.uk}$} }
\maketitle
\begin{abstract}
By using a formulation of motion equations for a viscous (compressible)
fluid flow in terms of the vorticity and the rate of expansion as
the main fluid dynamical variables, an approximation model is established
for compressible flows with slowly varied (over the space) fluid density.
The probabilistic tools and the main ingredient such as the duality
of conditional laws and the forward type Feynman-Kac formula are established
for elliptic operators of second order, in order to formulate the
corresponding random vortex method for a class of viscous compressible
fluid flows, based on their approximation motion equations.

\medskip{}

\emph{Keywords}: compressible flow, conditional law, diffusion, parabolic
equations, viscous fluid flow, vorticity, rate of expansion,

\medskip{}

\emph{MSC classifications}: 76M23, 76M25,  76M35, 76N99, 35Q30
\end{abstract}

\section{Introduction}

It is known that the vortex formulation for fluid dynamics is very
useful in the study of incompressible fluid flows. This is particularly
true for the purpose of numerically calculating solutions of the motion
equations for (incompressible) fluid flows and turbulent flows. The
vortex techniques have been used in the study of compressible fluid
flows too, cf. \citep{Lions1998} for example. The motion equations
for a viscous fluid flow are known (cf. Landau and Lifschitz \citep{Landau-LifshitzFluids}
for example) which are formulated as a system of partial differential
equations (PDEs) that the flow velocity $u$, the fluid density $\rho$,
and the pressure $p$ have to satisfy. For a viscous (compressible)
fluid flow, these PDEs need to be complemented by a state equation
from the Kinetic theory. There is a huge literature on the study of
various mathematical models of compressible fluid flows, which are
in one or another sense approximation models of the motion equations
for general fluid flows to capture features of the flows in question,
see for example Chen and Feldman \citep{ChenFeldman2010}, \citep{ChenFeldman2018}
and etc. on potential flows, Dafermos \citep{Dafermos2005} on conservation
law models, and Lions \citep{Lions1998} on general aspects of mathematics
in the study of fluid dynamics, as a small sample. Most of the mathematical
models for compressible fluid flows in literature are obtained by
specifying a state equation and/or by specifying a form of the velocity
(rotational or not), while there are exceptional and important mathematical
models, the Navier-Stokes equations for incompressible fluid flows,
where a state equation is not needed. Indeed the continuity equation
and the momentum equation are sufficient to describe an incompressible
fluid flow (at least for small time).

The goal of the present work attempts to extend the random vortex
method developed for incompressible flows to compressible viscous
flows from a prospective of their numerical study, numerical experiments
however are not included in the present paper. To this end we examine
the formulation of the motion equations for a viscous fluid flow in
terms of its vorticity and its rate of expansion (called expansion-rate
for short). The most important fluid dynamical variable for a fluid
flow is its velocity $u(x,t)$, whose two derivatives are the divergence
$\nabla\cdot u$, measuring the rate of expansion of fluid, and the
vorticity $\omega=\nabla\wedge u$. Under appropriate boundary conditions
on the velocity, which will not be studied however in the present
work, the velocity $u$ is determined uniquely by its vorticity $\omega$
and its expansion-rate $\phi$ at any instance (The Biot-Savart law).
The vorticity and expansion-rate formulation is established by allowing
the vorticity $\omega$ and the expansion-rate $\phi$ as independent
fluid dynamical variables instead of the velocity, and considering
the continuity equation and the evolution equations for $\omega$
and $\phi$ together as the motion equations. With this point of view,
the vorticity and expansion-rate formulation naturally leads to powerful
approximation models for compressible fluid flows with negligible
fluid density gradient, which comes up as a surprising reward.

Recall that the random vortex method (cf. for example \citep{Chorin 1973,Cottet and Koumoutsakos 2000,Goodman1987,Long 1988,Majda and Bertozzi 2002})
for solving numerically the motion equations of two dimensional (2D)
incompressible viscous fluid flows is implemented based on the following
observation. The transition probability density function $p(s,x;t,y)$
(where $t>s\geq0$, $x,y\in\mathbb{R}^{2}$) of the Brownian fluid
particles with velocity $u(x,t)$ is the fundamental solution associated
with the (backward) parabolic operator $L_{u}+\partial_{t}$, where
$L_{u}=\nu\Delta+u\cdot\nabla$, $\partial_{t}$ denotes the time
derivative $\frac{\partial}{\partial t}$, and $\nu>0$ is the viscosity
constant, cf. \citep{Stroock and Varadhan 1979}. For an incompressible
fluid flow, its velocity $u(x,t)$ is solenoidal, hence the adjoint
of $L_{u}$ is $L_{-u}$ and $p(s,x;t,y)$ is the fundamental solution
of the (forward) parabolic operator $L_{-u}-\partial_{t}$ (cf. \citep[Theorem 15, page 28]{Friedman 1964}).
This fact allows us to represent the vorticity $\omega$ in terms
of the initial vorticity $\omega_{0}$ and the distribution of the
Brownian particles. Together with the Biot-Savart law, we may represent
the velocity $u(x,t)$ in terms of McKean-Vlasov type stochastic differential
equations (SDEs). The remarkable feature of this random vortex approach
is that the equations obtained which determine the velocity field
$u(x,t)$ are ordinary SDEs involving the law of their solutions.
This approach relies on two facts. Firstly for a solenoidal vector
field $u$, the adjoint operator of $L_{u}+\partial_{t}$ is $L_{-u}-\partial_{t}$
on the space-time $\mathbb{R}^{2}\times[0,\infty)$ equipped with
the product Lebesgue measure. Secondly the vorticity $\omega$ for
a 2D flow (without external force applying on the fluid) is a solution
to the parabolic equation $(L_{-u}-\partial_{t})\omega=0$ so that
the vorticity $\omega(x,t)$ may be written as an integral of the
initial vorticity against the fundamental solution $p(s,x;t,y)$.
For a three dimensional (3D) incompressible viscous fluid flow with
velocity $u$, the vorticity $\omega$ evolves according to a more
complicated vorticity equation 
\[
(L_{-u}-\partial_{t})\omega+S\omega+G=0,
\]
where $S$ is the stress tensor, a time-dependent symmetric tensor.
It is easy towork out, by using It\^o's formula, a stochastic representation
(Feynman-Kac type formula) for the vorticity in terms of the backward
flow of the diffusion with velocity $u(x,t)$, which may be used to
implement numerical schemes via backward type SDEs. While these schemes
are substantially challenging and expansive. In a recent work \citep{QSZ2022}
the authors are able to reformulate the Feynman-Kac representation
for the vorticity $\omega$ in terms of McKeak-Vlasov type ordinary
SDEs. The key proposition in \citep{QSZ2022} is to derive a path
space version of the fact that the adjoint of $L_{u}+\partial_{t}$
is $L_{-u}-\partial_{t}$ in terms of conditional distribution duality
among diffusion processes, which can be described as the following.
Let $\mathbb{P}_{u}^{\xi,\tau}$ (where $\tau\geq0$ and $\xi\in\mathbb{R}^{d}$)
be the law of the diffusion with its infinitesimal generator $L_{u}$
started from $\xi$ at time $\tau$ and $\mathbb{P}_{u}^{\xi,0\rightarrow\eta,T}$
be the pinned measure of the $L_{u}$-diffusion started at $\xi$
and ended at $\eta$ at time $T>0$. The main tool developed in \citep{QSZ2022}
is the following conditional law duality: if $u$ is solenoidal then
\begin{equation}
\mathbb{P}_{-u^{T}}^{\eta,0\rightarrow\xi,T}\circ\tau_{T}=\mathbb{P}_{u}^{\xi,0\rightarrow\eta,T}\label{Duality-01}
\end{equation}
for every $T>0$, where $u^{T}(x,t)=u(x,(T-t)^{+})$ and $\tau_{T}:C([0,T];\mathbb{R}^{d})\rightarrow C([0,T];\mathbb{R}^{d})$
is the time reversal operation sending a path $x(t)$ to $x(T-t)$.
In this paper we are going to establish a new conditional law duality
among general diffusion processes where $u$ is not necessary being
divergence-free. This generalised conditional law duality will be
a key ingredient for the present study of viscous fluid flows.

The paper is organised as the following. In Section 2, we recall the
fundamental evolution equations that the vorticity and the expansion
rate for a viscous fluid flow must satisfy, which allows us to establish
an approximation model for a compressible flow with a slowly varied
(over the space) fluid density. Our model reduces to the exact motion
equations for incompressible fluid flows, including the Oberbeck-Boussinesq
flows. A random vortex and expansion-rate method via the vorticity
and expansion rate formulation for some important compressible fluid
flows, which are discussed in Section 3, by using the powerful stochastic
integral representations to be established in the remaining part of
the paper. We establish the main mathematical tools needed in the
previous sections in the last two sections. In Section 4 we establish
the probabilistic tools in order to implement the random vortex method
for viscous compressible fluid flows. In this section we establish
the duality of conditional laws among diffusions, Theorem \ref{thm2.2},
which is one of the main contributions of the paper. By using this
important tool, we establish a new functional integral representation
theorem for a class of linear parabolic equations, Theorem \ref{thm3.1},
in Section 5. These representations are formulated in terms of forward
SDEs rather than backward stochastic flows, so our representations
are different from the classical Feynman-Kac's formulas which are
usually established with backward SDEs, cf. Freidlin \citep{Freidlin1985}. 

The paper is written in a way so that Section 4 and Section 5, in
which the new probabilistic tools are established, can be read independently
by those who are not so much interested in fluid dynamics, but wish
to learn the techniques developed there which are likely useful in
dealing with other linear and non-linear evolution equations.

\section{The motion equations of viscous fluid flows}

We shall deal with a viscous fluid flow in $\mathbb{R}^{d}$ ($d=3$
or $d=2$) without space constraint. The equations of motion for a
(compressible) viscous fluid flow may be formulated in terms of the
flow velocity $u(x,t)$ (with its components $u^{i}(x,t)$ where $i=1,\ldots,d$),
the fluid density $\rho(x,t)$, and the pressure $p(x,t)$. To simplify
our notations we make use of the following convention for tensors:
$u_{;j}^{i}$ denotes the partial derivatives $\frac{\partial}{\partial x^{j}}u^{i}$,
$S_{j}^{i}=\frac{1}{2}\left(u_{;j}^{i}+u_{;i}^{j}\right)$, and $C_{j}^{i}=S_{j}^{i}-\frac{1}{d}\phi\delta_{j}^{i}$
where $\phi=\nabla\cdot u$ is the rate of expansion.

We shall reformulate the motion equations via the vorticity $\omega=\nabla\wedge u$
(whose components $\omega^{i}=\varepsilon^{ijk}u_{;j}^{k}$ and if
$d=2$, $\omega$ is identified with $\omega^{3}=\varepsilon^{3ij}u_{;i}^{j}$)
and the expansion rate $\phi=\nabla\cdot u$, which allows us to propose
approximation equations for a viscous fluid flow with a slightly varied
fluid density. To the best knowledge of the present authors, this
approach seems different from the existing methods in literature.

\subsection{The vorticity and expansion-rate formulation}

In this part we work out motion equations which are valid for fluid
flows in terms of the vorticity $\omega$ and the rate of expansion
$\phi$. The viewpoint in this formulation is to substitute the velocity
$u$ by its differentials the vorticity $\omega=\nabla\wedge u$ and
its divergence $\phi=\nabla\cdot u$. The formulation is based on
the identity that $\Delta u=-\nabla\wedge\omega+\nabla\phi$. If $u$,
$\omega$ and $\phi$ together with their derivatives vanish at infinity,
then the velocity $u$ may be determined by $\phi$ and $\omega$
via the Biot-Savart law. Indeed by Green's formula 
\begin{equation}
u(x,\cdot)=\int_{\mathbb{R}^{d}}K_{d}(y,x)\wedge\omega(y)\textrm{d}y-\int_{\mathbb{R}^{d}}\phi(y)K_{d}(y,x)\textrm{d}y\label{BS-e1}
\end{equation}
for $x\in\mathbb{R}^{d}$, where the time parameter is suppressed,
$K_{d}(y,x)=\nabla_{y}G(y,x)$ is the Biot-Savart singular integral
kernel, and $G(y,x)$ is the Green function in $\mathbb{R}^{d}$.
Hence 
\begin{equation}
K_{d}(y,x)=\frac{1}{2(d-1)\pi}\frac{y-x}{|y-x|^{d}}\quad\textrm{ for }y\neq x\label{B-S-K1}
\end{equation}
where $d=2$ or $3$.

The equations of motion for fluid flows (viscous or inviscid, compressible
or not) are known (cf. Landau and Lifschitz \citep[Chapter II]{Landau-LifshitzFluids}):

\begin{equation}
\partial_{t}\rho+\nabla\cdot(\rho u)=0\label{C-e1}
\end{equation}
and 
\begin{equation}
\rho\left(\partial_{t}+u\cdot\nabla\right)u^{i}=\frac{\partial}{\partial x^{j}}\left(-p\delta_{j}^{i}+2\mu C_{j}^{i}+\zeta\phi\delta_{j}^{i}\right)+\rho F^{i}\label{eq:NS-g2}
\end{equation}
for $i=1,\ldots,d$, where $F=(F^{i})$ is an external force applying
to the fluid, $\mu$ and $\zeta$ are the first and second viscosity
constants respectively. Here and thereafter throughout the paper,
Einstein's convention that repeated indices are summed over their
range is used, unless otherwise specified.

The system (\ref{C-e1}, \ref{eq:NS-g2}), as a system of PDEs, contains
still more unknown dynamical variables than the number of equations,
and therefore the system (\ref{C-e1}, \ref{eq:NS-g2}) has to be
supplied with a state equation which relates the pressure in terms
of the fluid density and/or other statistical quantities such as the
temperature. A state equation will be brought in if it is needed.

For simplicity we assume that the viscosity coefficients $\mu$, $\zeta$
are constant so that the momentum equation (\ref{eq:NS-g2}) can be
rewritten

\begin{equation}
\rho\left(\partial_{t}+u\cdot\nabla\right)u=\mu\Delta u-\nabla(p-\lambda\phi)+\rho F,\label{NS-C1}
\end{equation}
where $\lambda=\zeta+\frac{d-2}{d}\mu$.

Let us derive the vorticity equation and the evolution equation of
the rate of expansion which we believe are well known in literature,
though not in the form we are going to formulate, cf. \citep{Lions1998}.

\vskip0.3cm

\emph{1) Vorticity and expansion-rate equations -- 3D case}

\vskip0.3cm

Let us work out these equations for 3D flows first. Under our convention
that $u_{;j}^{i}=\frac{\partial}{\partial x^{j}}u^{i}$ and $\rho_{;j}=\frac{\partial}{\partial x^{j}}\rho$,
by differentiating (\ref{NS-C1}) we obtain that 
\begin{align}
\rho\left(\partial_{t}+u\cdot\nabla\right)u_{;k}^{i} & =\mu\Delta u_{;k}^{i}-\rho u_{;k}^{l}u_{;l}^{i}-\rho_{;k}\left(\partial_{t}+u\cdot\nabla\right)u^{i}\nonumber \\
 & -\frac{\partial^{2}}{\partial x^{k}\partial x^{i}}(p-\lambda\phi)+(\rho F^{i})_{;k}.\label{PDE-td1}
\end{align}
By taking trace both sides of (\ref{PDE-td1}) we derive the expansion-rate
equation 
\begin{align}
\partial_{t}\phi & =\sum_{i,j}\frac{\partial}{\partial x^{j}}\left(\frac{\mu+\lambda}{\rho}\delta_{ij}\frac{\partial}{\partial x^{i}}\phi\right)-(u\cdot\nabla)\phi-\nabla\rho^{-1}\cdot\nabla p\nonumber \\
 & -\mu\nabla\rho^{-1}\cdot(\nabla\wedge\omega)+|\omega|^{2}-|\nabla u|^{2}-\frac{1}{\rho}\Delta p+\nabla\cdot F.\label{phi-1}
\end{align}
Similarly, multiplying $\varepsilon^{jki}$ both sides of (\ref{PDE-td1})
and summing up the indices $k$ and $i$ we deduce the evolution equation
of the vorticity 
\begin{align}
\partial_{t}\omega & =\sum_{i,j}\frac{\partial}{\partial x^{j}}\left(\frac{\mu}{\rho}\delta_{ij}\frac{\partial}{\partial x^{i}}\omega\right)-(u\cdot\nabla)\omega+(\omega\cdot\nabla)u-\phi\omega\nonumber \\
 & -\mu\frac{\partial\rho^{-1}}{\partial x^{j}}\nabla\omega^{j}-\nabla\rho^{-1}\wedge\nabla p+(\lambda+\mu)\nabla\rho^{-1}\wedge\nabla\phi+\nabla\wedge F.\label{om-1}
\end{align}
These equations should be understood on the region where $\rho(x,t)$
is strictly positive. These are the fundamental equations for the
study of viscous fluid flows (cf. \citep{Landau-LifshitzFluids,Lions1998}
for example). 
\begin{rem}
Here we expand out the sum for the diffusion part (writing in its
divergence form) rather than implementing Einstein's convention or
writing $\nabla\cdot\left(\frac{\mu+\lambda}{\rho}\nabla\phi\right)$,
for clarity.
\end{rem}

\begin{rem}
If the fluid is incompressible, that is, the density $\rho$ of fluid
is constant, then the continuity equation (\ref{C-e1}) is reduced
to that $\phi=\nabla\cdot u=0$, hence the equation (\ref{phi-1})
for the rate of expansion becomes the Possion equation for the pressure
$p$. More precisely, for an incompressible fluid flow 
\begin{equation}
\frac{1}{\rho}\Delta p+|\nabla u|^{2}-|\omega|^{2}-\nabla\cdot F=0.\label{inc-01}
\end{equation}
The vorticity equation is reduced to the well-known vorticity transport
equation for an incompressible fluid flow: 
\begin{equation}
\left(\sum_{i,j}\frac{\partial}{\partial x^{j}}\frac{\mu}{\rho}\delta_{ij}\frac{\partial}{\partial x^{i}}-u\cdot\nabla-\partial_{t}\right)\omega+(\omega\cdot\nabla)u-\phi\omega+\nabla\wedge F=0.\label{in-vort-a1}
\end{equation}
\end{rem}

We next write down the evolution equations for the vorticity $\omega$
and rate of expansion $\phi$ under various assumptions on the fluid
flows.

\vskip0.3cm

\emph{2) Inviscid fluid flows}

\vskip0.3cm

By inviscid fluid flows, we mean the fluid flows with vanishing viscosity
coefficients, i.e. $\mu=\zeta=\lambda=0$. The momentum equation,
the vorticity equation and the expansion-rate equation can be simplified
greatly as the following: 
\begin{equation}
\rho(\partial_{t}+u\cdot\nabla)u=-\nabla p+\rho F,\label{E-e1}
\end{equation}
which is the Euler equation. The vorticity equation for inviscid fluid
flows is simplified as the following 
\begin{equation}
\partial_{t}\omega=-(u\cdot\nabla)\omega-\phi\omega+(\omega\cdot\nabla)u-\nabla\rho^{-1}\wedge\nabla p+\nabla\wedge F.\label{inviscid2}
\end{equation}
For 2D inviscid fluid flows there is a further simplification as the
term $(\omega\cdot\nabla)u$ vanishes.

Similarly the expansion rate equation is written as 
\begin{equation}
\partial_{t}\phi=-(u\cdot\nabla)\phi-\nabla\rho^{-1}\cdot\nabla p+|\omega|^{2}-|\nabla u|^{2}-\rho^{-1}\Delta p+\nabla\cdot F.\label{inviscid3}
\end{equation}

\vskip0.3cm

\emph{3) Vorticity and expansion-rate equations -- 2D case}

\vskip0.3cm

Let us employ the following conventions. For two dimensional vectors
$a=(a^{1},a^{2})$ and $b=(b^{1},b^{2})$, $a\wedge b$ is the scalar
\[
a\wedge b=\varepsilon^{3jk}a^{j}b^{k}=a^{1}b^{2}-a^{2}b^{1}.
\]
If $c$ is a constant identifying with $(0,0,c)$ then 
\[
a\wedge c=(\varepsilon^{ij3}a^{j}c)=(a^{2}c,-a^{1}c).
\]
The vorticity $\omega$ for two dimensional flows is defined as the
following: $\nabla\wedge u=(0,0,\omega^{3})$, and
\[
\nabla\wedge\omega=\left(\frac{\partial}{\partial x^{2}}\omega,-\frac{\partial}{\partial x^{1}}\omega\right).
\]
Therefore both terms $\frac{\mu}{\rho^{2}}\frac{\partial\rho}{\partial x^{j}}\nabla\omega^{j}$
and $(\omega\cdot\nabla)u$ vanish identically. Also for 2D flows,
$\lambda=\zeta$, so that the basic equations for 2D flows are simplified
as the following: 
\begin{align}
\partial_{t}\phi & =\sum_{i,j}\frac{\partial}{\partial x^{j}}\left(\frac{\mu+\zeta}{\rho}\delta_{ij}\frac{\partial}{\partial x^{i}}\phi\right)-(u\cdot\nabla)\phi-\nabla\rho^{-1}\cdot(p+\mu\nabla\wedge\omega)\nonumber \\
 & +|\omega|^{2}-|\nabla u|^{2}-\frac{1}{\rho}\Delta p+\nabla\cdot F\label{phi-2D}
\end{align}
and 
\begin{align}
\partial_{t}\omega & =\sum_{i,j}\frac{\partial}{\partial x^{j}}\left(\frac{\mu}{\rho}\delta_{ij}\frac{\partial\omega}{\partial x^{i}}\right)-(u\cdot\nabla)\omega-\phi\omega\nonumber \\
 & +(\zeta+\mu)\nabla\rho^{-1}\wedge\nabla\phi-\nabla\rho^{-1}\wedge\nabla p+\nabla\wedge F.\label{om-2D}
\end{align}

\vskip0.3truecm

\emph{4) Polytropic gas type flows}

\vskip0.3truecm

The equations of motion in general have to be supplemented by a state
equation. For polytropic gases (cf. \citep{ChenFeldman2018} for example),
this is achieved by the state equation: $p=\kappa\rho^{\gamma}$ where
$\gamma>1$ and $\kappa>0$ are two constants. In particular, $\nabla\rho\wedge\nabla p=0$.

In general we say a fluid flow is polytropic gas type if the pressure
$p$ is a function of its fluid density $\rho$ only. Thus for a polytropic
type gas flow, the term $\nabla\rho\wedge\nabla p$ appearing in the
vorticity equation vanishes identically. In particular, for a two
dimensional polytropic gas type fluid flow, the rate of expansion
equation and the vorticity equation are given as the following: 
\begin{align}
\partial_{t}\phi & =\sum_{i,j}\frac{\partial}{\partial x^{j}}\left(\frac{\mu+\zeta}{\rho}\delta_{ij}\frac{\partial}{\partial x^{i}}\phi\right)-(u\cdot\nabla)\phi\nonumber \\
 & -\nabla\rho^{-1}\cdot(\mu\nabla\wedge\omega+\nabla p)+|\omega|^{2}-|\nabla u|^{2}-\frac{1}{\rho}\Delta p+\nabla\cdot F\label{phi-poly2D-1}
\end{align}
and 
\begin{align}
\partial_{t}\omega & =\sum_{i,j}\frac{\partial}{\partial x^{j}}\left(\frac{\mu}{\rho}\delta_{ij}\frac{\partial}{\partial x^{i}}\omega\right)-(u\cdot\nabla)\omega-\phi\omega+\nabla\wedge F\nonumber \\
 & +(\zeta+\mu)\nabla\rho^{-1}\wedge\nabla\phi.\label{om-2D-1}
\end{align}

\subsection{Approximation equations for compressible flows}

As a consequence of the vorticity and expansion rate formulation for
fluid flows, we are now in a position to uncover an approximation
system for a compressible fluid flow with a negligible fluid density
gradient. Suppose the viscosity constants are small, and the fluid
is nearly incompressible so that, the fluid density varies little
over the space, and therefore the gradient of the density is negligible.
Then 
\[
-\nabla\rho^{-1}\cdot\nabla p\approx0,\quad-\mu\nabla\rho^{-1}\cdot(\nabla\wedge\omega)\approx0
\]
and
\[
-\nabla\rho^{-1}\wedge\nabla p\approx0,\quad(\lambda+\mu)\nabla\rho^{-1}\wedge\nabla\phi\approx0,\quad-\mu\frac{\partial\rho^{-1}}{\partial x^{j}}\nabla\omega^{j}\approx0.
\]
Hence these terms may be ignored in numerical schemes. Also note that
if the fluid is incompressible, then 
\[
|\omega|^{2}-|\nabla u|^{2}-\frac{1}{\rho}\Delta p+\nabla\cdot F
\]
vanishes identically, thus for nearly compressible flows, this term
may be ignored in the expansion-rate equation.

Therefore we are led to the following approximation equations for
a compressible fluid flow with a negligible fluid density gradient:
\begin{equation}
\partial_{t}\rho+(u\cdot\nabla)\rho+\phi\rho=0,\label{AE-c1}
\end{equation}
\begin{equation}
(\mu+\lambda)\sum_{i,j}\frac{\partial}{\partial x^{j}}\left(\frac{1}{\rho}\delta_{ij}\frac{\partial}{\partial x^{i}}\phi\right)-(u\cdot\nabla)\phi-\partial_{t}\phi=0,\label{AE-c2}
\end{equation}
\begin{equation}
\mu\sum_{i,j}\frac{\partial}{\partial x^{j}}\left(\frac{1}{\rho}\delta_{ij}\frac{\partial}{\partial x^{i}}\omega\right)-(u\cdot\nabla)\omega-\partial_{t}\omega+(\omega\cdot\nabla)u-\phi\omega+\nabla\wedge F=0\label{AE-c3}
\end{equation}
together with the definition that 
\begin{equation}
\nabla\cdot u=\phi\quad\textrm{ and }\quad\nabla\wedge u=\omega.\label{AE-r1}
\end{equation}

The previous system of approximation motion equations becomes exact
for incompressible fluid flows. Note that a state equation can be
avoided in the previous system.

The relations in (\ref{AE-r1}) show that the velocity $u(x,t)$ (with
appropriate boundary conditions at infinity) is determined by the
Biot-Savart law (\ref{BS-e1}), which in turn implies that $u$ is
a non-local function of the expansion rate $\phi$ and the vorticity
$\omega$. Therefore $u$ does not count as an independent dynamical
variable in equations (\ref{AE-c1}, \ref{AE-c2}, \ref{AE-c3}).
The advantage for taking (\ref{AE-c1}, \ref{AE-c2}, \ref{AE-c3},
\ref{AE-r1}) as approximation for compressible fluid flows lies in
the fact that the system does not contain explicitly the pressure
$p$, and if we count $\rho$, $\phi$ and $\omega$ as independent
fluid dynamical variables rather than $\rho,u$ and the pressure $p$,
then there are exactly five independent dynamical variables and five
equations (\ref{AE-c1}, \ref{AE-c2}, \ref{AE-c3}). In particular,
for the approximation motion equations of the compressible fluid flows,
a state equation is no longer needed for closing the fluid dynamics,
which is in fact the case for incompressible flows. 
\begin{rem}
For a two dimensional polytropic type gas fluid flow, the approximation
vorticity equation (\ref{AE-c3}) differers from the exact one only
by a term $(\zeta+\mu)\nabla\rho^{-1}\wedge\nabla\phi$ which should
be rather small if the fluid flow is not so strongly compressible. 
\end{rem}

We end up this part by a short discussion of inviscid fluid flows.
For an inviscid fluid flow with a negligible fluid density gradient,
then the approximation motion equations are given as the following:
\begin{equation}
\partial_{t}\rho+(u\cdot\nabla)\rho+\phi\rho=0,\label{AE-c1-2}
\end{equation}
\begin{equation}
\partial_{t}\phi+(u\cdot\nabla)\phi=0,\label{AE-c2-2}
\end{equation}
\begin{equation}
\partial_{t}\omega+(u\cdot\nabla)\omega-(\omega\cdot\nabla)u+\phi\omega-\nabla\wedge F=0\label{AE-c3-2}
\end{equation}
and 
\begin{equation}
\nabla\cdot u=\phi\quad\textrm{ and }\quad\nabla\wedge u=\omega.\label{AE-r1-2}
\end{equation}

\section{Random vortex and expansion-rate formulation}

In this section we establish the functional integral representation
for solutions of the approximation equations (\ref{AE-c1}, \ref{AE-c2},
\ref{AE-c3}, \ref{AE-r1}) for a compressible with a negligible fluid
density gradient, in terms of ordinary McKean-Vlasov stochastic differential
equations. In addition to the notations established in the previous
section, we shall introduce several notions and notations for stating
our results, which will be discussed further in Section 4 and Section
5. The new stochastic representations for solutions of linear parabolic
equations in terms of SDEs shall be established in Sections 4 and
5. 

Given a time-dependent vector field $b(x,t)$ on $\mathbb{R}^{d}$
(where $d=3$ or $d=2$ in this section), we introduce a family of
elliptic operators
\[
L_{b;\kappa}=\kappa\sum_{i,j}\frac{\partial}{\partial x^{j}}\frac{1}{\rho}\delta_{ij}\frac{\partial}{\partial x^{i}}\omega+b\cdot\nabla
\]
where $\kappa>0$. Let $q_{j}^{i}=S_{j}^{i}-\phi\delta_{ij}$. Then
the approximation equations (\ref{AE-c2}, \ref{AE-c3}) may be written
as 
\begin{equation}
\left(L_{-u;\lambda+\mu}-\partial_{t}\right)\phi=0\label{AE-v1}
\end{equation}
and 
\begin{equation}
\left(L_{-u;\mu}-\partial_{t}\right)\omega^{i}+q_{j}^{i}\omega^{j}+f^{i}=0\label{AE-v2}
\end{equation}
where $f=\nabla\wedge F$, i.e. $f^{i}=\varepsilon^{ijk}\frac{\partial}{\partial x^{j}}F^{k}$.
Recall that $\phi$ is the expansion-rate $\phi$ and $\omega$ is
the vorticity, and $u$ is determined by the equations: $\nabla\cdot u=\phi$
and $\nabla\wedge u=\omega$.

Let $\phi_{0}=\phi(\cdot,0)$ and $\omega_{0}=\omega(\cdot,0)$ be
the initial rate of expansion and the initial vorticity. 

$\varOmega=C([0,\infty);\mathbb{R}^{d})$ denotes the space of continuous
paths in $\mathbb{R}^{d}$ which is the sample space, and $X=(X_{t})_{t\geq0}$
is the coordinate process on $\varOmega$. That is, for each $t\geq0$,
$X_{t}:\varOmega\mapsto\mathbb{R}^{d}$ defined by $X_{t}(\psi)=\psi(t)$
for every $\psi\in\varOmega$. Let $\mathcal{F}=\sigma\left\{ X_{t}:t\geq0\right\} $
be the smallest $\sigma$-algebra on $\varOmega$ so that $X_{t}$
are measurable for all $t\geq0$. For each $\eta\in\mathbb{R}^{d}$,
$\mathbb{P}_{b;\kappa}^{\eta}$ denotes the distribution of the $L_{b;\kappa}$-diffusion
starting from $\eta\in\mathbb{R}^{d}$ at time $0$, which is the
unique probability measure on $(\varOmega,\mathcal{F})$ such that
\[
\mathbb{P}_{b;\kappa}^{\eta}\left[X_{0}=\eta\right]=1
\]
and
\[
M_{t}^{[h]}=h(X_{t},t)-h(\eta,0)-\int_{0}^{t}L_{b;\kappa}h(X_{s},s)\textrm{d}s
\]
(for $t\geq0$) is a continuous local martingale on $(\varOmega,\mathcal{F},\mathbb{P}_{b;\kappa}^{\eta})$
for every $h\in C^{2,1}(\mathbb{R}^{d}\times[0,\infty))$, cf. Stroock
and Varadhan \citep{Stroock and Varadhan 1979}. $p_{b;\kappa}(s,x;t,y)$
(for $t>s\geq0$, $x$ and $y$ belong to $\mathbb{R}^{d}$) denotes
the transition probability density function of the $L_{b;\kappa}$-diffusion.

If $\eta,\xi\in\mathbb{R}^{d}$ and $T>0$, then $\mathbb{P}_{b;\kappa}^{\eta,0\rightarrow\xi,T}$
denotes the conditional law of the $L_{b;\kappa}$-diffusion $X$
starting from $\eta$ at time zero given that $X_{T}=\xi$. Formally
\[
\mathbb{P}_{b;\kappa}^{\eta,0\rightarrow\xi,T}\left[\cdot\right]=\mathbb{P}_{b;\kappa}^{\eta}\left[\left.\cdot\right|X_{T}=\xi\right],
\]
for the precise definition see Section 4 below. 

Let $K_{d}(y,x)$ denote the Biot-Savart singular integral kernel
on $\mathbb{R}^{d}$ given in (\ref{B-S-K1}).
\begin{thm}
\label{thm5.1-1}Let $u(x,t)$ be the solution of the approximate
compressible fluid flow equations (\ref{AE-c1}, \ref{AE-c2}, \ref{AE-c3})
and (\ref{AE-r1}). For every path $\psi\in\varOmega$ and $T>0$,
$t\mapsto R_{j}^{i}(\psi,T;t)$ are the unique solutions to the ordinary
differential equations 
\begin{equation}
dR_{j}^{i}(\psi,T;t)=-R_{k}^{i}(\psi,T;t)S_{j}^{k}(\psi(t),t),\quad R_{j}^{i}(\psi,T;T)=\delta_{ij}.\label{R-g1}
\end{equation}
Then 
\begin{align}
u(x,T) & =\int_{\mathbb{R}^{3}}\mathbb{P}_{u;\mu}^{\eta}\left[K_{3}(X_{T},x)\wedge R(X,T;0)\omega_{0}(\eta)\right]\textrm{d}\eta\nonumber \\
 & +\int_{0}^{T}\int_{\mathbb{R}^{3}}\mathbb{P}_{u;\mu}^{\eta}\left[\textrm{e}^{\int_{0}^{t}\phi(X_{s},s)\textrm{d}s}K_{3}(X_{T},x)\wedge R(X,T;t)f(X_{t},t)\right]\textrm{d}\eta\textrm{d}t\nonumber \\
 & -\int_{\mathbb{R}^{3}}\mathbb{P}_{u;\lambda+\mu}^{\eta}\left[\textrm{e}^{\int_{0}^{T}\phi(X_{s},s)\textrm{d}s}K_{3}(X_{T},x)\right]\phi_{0}(\eta)\textrm{d}\eta\label{AC2-rep2}
\end{align}
for every $x\in\mathbb{R}^{3}$ and $T>0$. 
\end{thm}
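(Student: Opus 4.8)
The plan is to combine the Biot--Savart law (\ref{BS-e1}) at time $T$ with functional integral representations for the vorticity $\omega(\cdot,T)$ and the expansion-rate $\phi(\cdot,T)$, the latter coming from the forward Feynman--Kac type formula (Theorem \ref{thm3.1}) applied to the parabolic equations (\ref{AE-v1}) and (\ref{AE-v2}). First I would treat the expansion-rate: since $\phi$ solves $\left(L_{-u;\lambda+\mu}-\partial_{t}\right)\phi=0$ with initial datum $\phi_{0}$, the forward representation theorem expresses $\phi(x,T)$ as an integral over starting points $\eta$ of $\mathbb{P}_{u;\lambda+\mu}^{\eta}\bigl[\exp(\int_{0}^{T}\phi(X_{s},s)\mathrm{d}s)\,\delta(X_{T}-x)\bigr]\phi_{0}(\eta)$; the exponential weight $\mathrm{e}^{\int_{0}^{T}\phi(X_{s},s)\mathrm{d}s}$ is exactly the Jacobian/zeroth-order correction produced when passing from the adjoint operator $L_{u;\lambda+\mu}-\partial_{t}$ to the forward operator, i.e. it accounts for the $\nabla\cdot u=\phi\neq0$ term via the conditional law duality Theorem \ref{thm2.2}. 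Substituting this into the $\phi$-part of (\ref{BS-e1}) and integrating the $\delta$-function against $K_{3}(\cdot,x)$ yields the third line of (\ref{AC2-rep2}).

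Next I would handle the vorticity. Equation (\ref{AE-v2}) is a linear parabolic system with a zeroth-order matrix coefficient $q_{j}^{i}=S_{j}^{i}-\phi\delta_{ij}$ and an inhomogeneous term $f^{i}$. The matrix part splits: the $-\phi\delta_{ij}$ piece again contributes the scalar exponential factor $\mathrm{e}^{\int\phi\,\mathrm{d}s}$ along paths, while the symmetric-tensor part $S_{j}^{k}$ is resolved by the resolvent matrix $R_{j}^{i}(\psi,T;t)$ defined by the linear ODE (\ref{R-g1}) along each path $\psi$ --- this is the standard way a first-order-in-time linear ODE system $\dot{v}+Sv=0$ is integrated by a propagator, and running it backward from $t=T$ (where $R=\mathrm{Id}$) down to $t=0$ or to an intermediate time $t$ produces the two vorticity contributions. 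Applying Theorem \ref{thm3.1} (the Duhamel/variation-of-constants form of the forward Feynman--Kac formula) to (\ref{AE-v2}) then gives
\begin{align}
\omega(x,T) & =\int_{\mathbb{R}^{3}}\mathbb{P}_{u;\mu}^{\eta}\Bigl[\mathrm{e}^{\int_{0}^{T}\phi(X_{s},s)\mathrm{d}s}\,\delta(X_{T}-x)\,R(X,T;0)\omega_{0}(\eta)\Bigr]\mathrm{d}\eta\nonumber\\
 & \quad+\int_{0}^{T}\!\!\int_{\mathbb{R}^{3}}\mathbb{P}_{u;\mu}^{\eta}\Bigl[\mathrm{e}^{\int_{t}^{T}\phi(X_{s},s)\mathrm{d}s}\,\delta(X_{T}-x)\,R(X,T;t)f(X_{t},t)\Bigr]\mathrm{d}\eta\,\mathrm{d}t,\nonumber
\end{align}
where one must be careful that the exponential weight in the source term runs only from $t$ to $T$ (as written in (\ref{AC2-rep2})) --- actually, re-examining the statement, the factor there is $\mathrm{e}^{\int_{0}^{t}\phi\,\mathrm{d}s}$, which is the correct normalisation once the $\delta(X_{T}-x)$ is interpreted through the pinned measure $\mathbb{P}_{u;\mu}^{\eta,0\to x,T}$ carrying its own exponential density; I would reconcile this bookkeeping carefully using the precise form of Theorem \ref{thm3.1}. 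Feeding this $\omega(\cdot,T)$ into the $\omega$-part $\int K_{3}(y,x)\wedge\omega(y,T)\,\mathrm{d}y$ of (\ref{BS-e1}) and using the $\delta$-function to collapse the $y$-integral against $X_{T}$ gives the first two lines of (\ref{AC2-rep2}).

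The main obstacle I expect is not the algebraic assembly but the justification of the representations themselves: one needs sufficient regularity and decay of $u$, $\omega$, $\phi$ (and of $\rho$, which enters the diffusion coefficient $\mu/\rho$) for Theorem \ref{thm3.1} to apply, for the pinned measures $\mathbb{P}_{u;\mu}^{\eta,0\to\xi,T}$ to be well defined, for Fubini to permit interchanging the $\eta$- and $y$-integrals with the expectations, and for the singular Biot--Savart kernel $K_{3}$ to be integrable against the (smooth, rapidly decaying) vorticity and expansion-rate fields. A secondary technical point is the path-by-path solvability and boundedness of the linear ODE (\ref{R-g1}): since its coefficient $S_{j}^{k}(\psi(t),t)$ is only evaluated along a generic continuous path, one needs $S$ bounded (or at least locally bounded with suitable a priori control along diffusion paths) to guarantee existence, uniqueness, and the measurability of $\psi\mapsto R(\psi,T;t)$ needed to take expectations. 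I would state these as standing regularity assumptions, prove the $L^{\infty}_{t}$-bound on $R$ via Gr\"onwall, and otherwise reduce everything to a direct application of Theorem \ref{thm3.1} term by term.
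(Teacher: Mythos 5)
Your overall strategy is exactly the paper's: apply Theorem \ref{thm3.1} to the expansion-rate equation (\ref{AE-v1}) and to the vorticity system (\ref{AE-v2}), then assemble $u(x,T)$ through the Biot--Savart law, Fubini, and the splitting identity $\int_{\mathbb{R}^{3}}\mathbb{P}_{u;\mu}^{\eta,0\rightarrow\xi,T}\left[H\right]p_{u;\mu}(0,\eta;T,\xi)\,\mathrm{d}\xi=\mathbb{P}_{u;\mu}^{\eta}\left[H\right]$, which is what your $\delta(X_{T}-x)$ device amounts to. The expansion-rate term and the final assembly are fine.

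The genuine gap is in the vorticity step, where your exponential bookkeeping is wrong in both terms and the reconciliation you sketch would not repair it. There are two distinct sources of factors $\mathrm{e}^{\int\phi}$: (i) Theorem \ref{thm3.1} places the weight $\mathrm{e}^{-\int_{0}^{T}\nabla\cdot b(X_{s},s)\mathrm{d}s}=\mathrm{e}^{\int_{0}^{T}\phi(X_{s},s)\mathrm{d}s}$, over the \emph{full} interval, on \emph{every} term of the representation --- it comes from the conditional-law duality for the non-solenoidal drift $b=-u$, not from the Duhamel structure; and (ii) the propagator $A$ for the full multiplier $q_{j}^{i}=S_{j}^{i}-\phi\delta_{ij}$ satisfies $\frac{\mathrm{d}}{\mathrm{d}t}A=-AS+\phi A$ with $A(T)=\mathrm{Id}$, hence $A(X,T;t)=\mathrm{e}^{-\int_{t}^{T}\phi(X_{s},s)\mathrm{d}s}R(X,T;t)$ --- note the \emph{negative} exponent. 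Multiplying (i) and (ii) leaves no exponential at all on the initial-vorticity term and produces $\mathrm{e}^{\int_{0}^{t}\phi(X_{s},s)\mathrm{d}s}$ on the source term, exactly as in (\ref{AC2-rep2}). Your displayed formula instead carries an extra $\mathrm{e}^{\int_{0}^{T}\phi}$ on the initial term (which you did not notice) and $\mathrm{e}^{\int_{t}^{T}\phi}$ on the source term (which you noticed but attributed to the pinned measure ``carrying its own exponential density''; it does not --- in Theorem \ref{thm3.1} the exponential appears explicitly inside the expectation and the pinned measure is the plain bridge of the $\mathscr{L}_{a;-b}$-diffusion). Until the two exponentials are separated and recombined as above, the expression you would feed into the Biot--Savart law does not reduce to (\ref{AC2-rep2}).
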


\begin{proof}
Let $T>0$ be arbitrary but fixed. By applying the stochastic representation
Theorem \ref{thm3.1} to $\phi$ (with $a^{ij}=(\lambda+\mu)\frac{1}{\rho}\delta_{ij}$
and $b=-u$) we obtain that 
\begin{equation}
\phi(\xi,T)=\int_{\mathbb{R}^{3}}\mathbb{P}_{u;\lambda+\mu}^{\eta,0\rightarrow\xi,T}\left[\textrm{e}^{\int_{0}^{T}\phi(X_{s},s)\textrm{d}s}\phi_{0}(\eta)\right]p_{u;\lambda+\mu}(0,\eta;T,\xi)\textrm{d}\eta\label{com-rep1}
\end{equation}
for $\xi\in\mathbb{R}^{3}$. To work out an implicit representation
for the vorticity $\omega$, one applies Theorem \ref{thm3.1} to
(\ref{AE-v2}) too. Therefore we define a gauge functional $t\mapsto A(\psi,T;t)$
for every continuous path $\psi$ and $T>0$ by solving the system
of ODEs: 
\begin{equation}
\frac{\textrm{d}}{\textrm{d}t}A_{j}^{i}(\psi,T;t)=-A_{k}^{i}(\psi,T;t)S_{j}^{k}(\psi(t),t)+A_{j}^{i}(\psi,T;t)\phi(\psi(t),t)\label{com-rep2}
\end{equation}
and 
\begin{equation}
A_{j}^{i}(\psi,T;T)=\delta_{ij}.\label{com-rep3}
\end{equation}
Then, by using Theorem \ref{thm3.1} to the vorticity equation (\ref{AE-v2})
we obtain the following functional integral representation
\begin{align}
\omega^{i}(\xi,T) & =\int_{\mathbb{R}^{3}}\mathbb{P}_{u;\mu}^{\eta,0\rightarrow\xi,T}\left[\textrm{e}^{\int_{0}^{T}\phi(X_{s},s)\textrm{d}s}A_{j}^{i}(X,T;0)\omega_{0}^{j}(\eta)\right]p_{u;\mu}(0,\eta;T,\xi)\textrm{d}\eta\nonumber \\
 & +\int_{0}^{T}\int_{\mathbb{R}^{3}}\mathbb{P}_{u;\mu}^{\eta,0\rightarrow\xi,T}\left[\textrm{e}^{\int_{0}^{T}\phi(X_{s},s)\textrm{d}s}A_{j}^{i}(X,T;s)f^{j}(X_{s},s)\right]p_{u;\mu}(0,\eta;T,\xi)\textrm{d}\eta\textrm{d}s.\label{com-rep4}
\end{align}
While $R_{j}^{i}(\psi,T;t)=\textrm{e}^{\int_{t}^{T}\phi(\psi(s),s)\textrm{d}s}A_{j}^{i}(\psi,T;t)$
(for $t\in[0,T]$) are the unique solutions to (\ref{R-g1}). Thus
(\ref{com-rep4}) can be rewritten as 
\begin{align}
\omega^{i}(\xi,T) & =\int_{\mathbb{R}^{3}}\mathbb{P}_{u;\mu}^{\eta,0\rightarrow\xi,T}\left[R_{j}^{i}(X,T;0)\omega_{0}^{j}(\eta)\right]p_{u;\mu}(0,\eta;T,\xi)\textrm{d}\eta\nonumber \\
 & +\int_{0}^{T}\int_{\mathbb{R}^{3}}\mathbb{P}_{u;\mu}^{\eta,0\rightarrow\xi,T}\left[\textrm{e}^{\int_{0}^{t}\phi(X_{s},s)\textrm{d}s}R_{j}^{i}(X,T;t)f^{j}(X_{t},t)\right]p_{u;\mu}(0,\eta;T,\xi)\textrm{d}\eta\textrm{d}t.\label{com-rep4-1}
\end{align}
Finally we apply the Biot-Savart law (\ref{BS-e1}) and write $u(x,T)$
in terms of the singular integrals against $\phi$ and $\omega$.
More precisely, since 
\[
u(x,T)=\int_{\mathbb{R}^{3}}K_{3}(\xi,x)\wedge\omega(\xi,T)\textrm{d}\xi-\int_{\mathbb{R}^{3}}K_{3}(\xi,x)\phi(\xi,T)\textrm{d}\xi.
\]
Substituting $\omega(\xi,T)$ and $\phi(\xi,T)$ in the previous equation
via (\ref{com-rep1}) and (\ref{com-rep4-1}) respectively, and using
the Fubini theorem, one obtains
\begin{align*}
u(x,T) & =\int_{\mathbb{R}^{3}}\int_{\mathbb{R}^{3}}\mathbb{P}_{u;\mu}^{\eta,0\rightarrow\xi,T}\left[K_{3}(X_{T},x)\wedge R(X,T;0)\omega_{0}(\eta)\right]p_{u;\mu}(0,\eta;T,\xi)\textrm{d}\xi\textrm{d}\eta\\
 & +\int_{0}^{T}\int_{\mathbb{R}^{3}}\int_{\mathbb{R}^{3}}\mathbb{P}_{u;\mu}^{\eta,0\rightarrow\xi,T}\left[\textrm{e}^{\int_{0}^{t}\phi(X_{s},s)\textrm{d}s}K_{3}(X_{T},x)\wedge R(X,T;t)f(X_{t},t)\right]p_{u;\mu}(0,\eta;T,\xi)\textrm{d}\xi\textrm{d}\eta\textrm{d}t\\
 & -\int_{\mathbb{R}^{3}}\int_{\mathbb{R}^{3}}\mathbb{P}_{u;\lambda+\mu}^{\eta,0\rightarrow\xi,T}\left[\textrm{e}^{\int_{0}^{T}\phi(X_{s},s)\textrm{d}s}\phi_{0}(\eta)K_{3}(X_{T},x)\right]p_{u;\lambda+\mu}(0,\eta;T,\xi)\textrm{d}\xi\textrm{d}\eta.
\end{align*}
The representation then follows by integrating the variable $\xi$
and the integration formula:
\[
\int_{\mathbb{R}^{3}}\mathbb{P}_{u;\mu}^{\eta,0\rightarrow\xi,T}\left[H\right]p_{u;\mu}(0,\eta;T,\xi)\textrm{d}\xi=\mathbb{P}_{u;\mu}^{\eta,0}\left[H\right]
\]
for any integrable $H$ which $\mathcal{F}_{T}$-measurable, where
$\mathcal{F}_{T}=\sigma\left\{ X_{t}:t\leq T\right\} $, which completes
the proof.
\end{proof}
There is a significant simplification for two dimensional flows, which
is useful in numerical schemes for solving the approximate dynamical
equations of compressible fluid flows. 
\begin{thm}
\label{thm5.1}Suppose $u(x,t)$ be the solution to the approximation
2D compressible fluid flows: 
\begin{equation}
\partial_{t}\rho=-(u\cdot\nabla)\rho-\phi\rho,\label{AE-c1-1}
\end{equation}
\begin{equation}
\partial_{t}\phi=(\mu+\lambda)\sum_{i,j}\frac{\partial}{\partial x^{j}}\left(\frac{1}{\rho}\delta_{ij}\frac{\partial}{\partial x^{i}}\phi\right)-(u\cdot\nabla)\phi,\label{AE-c2-1}
\end{equation}
\begin{equation}
\partial_{t}\omega=\mu\sum_{i,j}\frac{\partial}{\partial x^{j}}\left(\frac{1}{\rho}\delta_{ij}\frac{\partial}{\partial x^{i}}\omega\right)-(u\cdot\nabla)\omega-\phi\omega+\nabla\wedge F\label{AE-c3-1}
\end{equation}
and 
\begin{equation}
\nabla\cdot u=\phi\quad\textrm{ and }\quad\nabla\wedge u=\omega.\label{AE-r1-1}
\end{equation}
Then 
\begin{align}
u(x,T) & =\int_{\mathbb{R}^{2}}\mathbb{P}_{u;\mu}^{\eta}\left[K_{2}(X_{T},x)\wedge\omega_{0}(\eta)\right]\textrm{d}\eta\nonumber \\
 & +\int_{0}^{T}\int_{\mathbb{R}^{2}}\mathbb{P}_{u;\mu}^{\eta}\left[\textrm{e}^{\int_{0}^{t}\phi(X_{s},s)\textrm{d}s}K_{2}(X_{T},x)\wedge f(X_{t},t)\right]\textrm{d}\eta\textrm{d}t\nonumber \\
 & -\int_{\mathbb{R}^{2}}\mathbb{P}_{u;\lambda+\mu}^{\eta}\left[\textrm{e}^{\int_{0}^{T}\phi(X_{s},s)\textrm{d}s}K_{2}(X_{T},x)\right]\phi_{0}(\eta)\textrm{d}\eta,\label{2D-AC1}
\end{align}
where $f=\frac{\partial}{\partial x_{1}}F^{2}-\frac{\partial}{\partial x_{2}}F^{1}$,
$K_{2}(y,x)$ is the Biot-Savart kernel in $\mathbb{R}^{2}$. 
\end{thm}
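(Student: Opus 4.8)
The plan is to repeat, in two dimensions, the argument used to prove Theorem \ref{thm5.1-1}, taking advantage of the fact that in 2D the vortex-stretching term $(\omega\cdot\nabla)u$ disappears, so that the matrix-valued gauge functional of the 3D case degenerates to a scalar which turns out to be identically $1$. First I would record that, with $\omega$ now the scalar $\omega^{3}$ and $f=\partial_{x_{1}}F^{2}-\partial_{x_{2}}F^{1}$, the approximation equations (\ref{AE-c2-1}), (\ref{AE-c3-1}) can be written as
\[
\left(L_{-u;\lambda+\mu}-\partial_{t}\right)\phi=0,\qquad\left(L_{-u;\mu}-\partial_{t}\right)\omega-\phi\omega+f=0,
\]
which is exactly the specialisation of (\ref{AE-v1}), (\ref{AE-v2}) in which the coefficient $q_{j}^{i}=S_{j}^{i}-\phi\delta_{ij}$ is replaced by the scalar $-\phi$, the symmetric part $S_{j}^{i}$ being absent because $(\omega\cdot\nabla)u\equiv0$ for 2D flows.

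Next I would apply the functional integral representation Theorem \ref{thm3.1} to $\phi$ (with $a^{ij}=(\lambda+\mu)\rho^{-1}\delta_{ij}$ and drift $b=-u$), which reproduces (\ref{com-rep1}) over $\mathbb{R}^{2}$, and then to the scalar equation for $\omega$ (with $a^{ij}=\mu\rho^{-1}\delta_{ij}$, $b=-u$). The gauge functional attached to the zeroth-order coefficient $-\phi$ now solves the scalar ODE $\frac{\textrm{d}}{\textrm{d}t}A(\psi,T;t)=A(\psi,T;t)\phi(\psi(t),t)$ with $A(\psi,T;T)=1$, hence $A(\psi,T;t)=\exp\left(-\int_{t}^{T}\phi(\psi(s),s)\textrm{d}s\right)$, so that the 2D analogue of the solution $R$ of (\ref{R-g1}), namely $\exp\left(\int_{t}^{T}\phi(\psi(s),s)\textrm{d}s\right)A(\psi,T;t)$, is identically $1$. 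Thus the weight $\textrm{e}^{\int_{0}^{T}\phi\,\textrm{d}s}$ generated by the non-solenoidal drift $-u$ is exactly cancelled, in the initial-data term, by the Feynman--Kac weight coming from $-\phi\omega$, and the vorticity representation collapses to
\begin{align*}
\omega(\xi,T) & =\int_{\mathbb{R}^{2}}\mathbb{P}_{u;\mu}^{\eta,0\rightarrow\xi,T}\left[\omega_{0}(\eta)\right]p_{u;\mu}(0,\eta;T,\xi)\,\textrm{d}\eta\\
 & \quad+\int_{0}^{T}\!\!\int_{\mathbb{R}^{2}}\mathbb{P}_{u;\mu}^{\eta,0\rightarrow\xi,T}\left[\textrm{e}^{\int_{0}^{t}\phi(X_{s},s)\textrm{d}s}f(X_{t},t)\right]p_{u;\mu}(0,\eta;T,\xi)\,\textrm{d}\eta\,\textrm{d}t.
\end{align*}

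Finally I would substitute this, together with (\ref{com-rep1}), into the 2D Biot--Savart law (\ref{BS-e1}), $u(x,T)=\int_{\mathbb{R}^{2}}K_{2}(\xi,x)\wedge\omega(\xi,T)\,\textrm{d}\xi-\int_{\mathbb{R}^{2}}K_{2}(\xi,x)\phi(\xi,T)\,\textrm{d}\xi$, interchange the order of integration by Fubini, and integrate out the variable $\xi$ by means of the identity $\int_{\mathbb{R}^{2}}\mathbb{P}_{u;\kappa}^{\eta,0\rightarrow\xi,T}\left[H\right]p_{u;\kappa}(0,\eta;T,\xi)\,\textrm{d}\xi=\mathbb{P}_{u;\kappa}^{\eta}\left[H\right]$ (for $\mathcal{F}_{T}$-measurable integrable $H$, here with $H$ involving $K_{2}(X_{T},x)$), which yields (\ref{2D-AC1}). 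I expect the only delicate point to be the same one as in the proof of Theorem \ref{thm5.1-1}, namely justifying the Fubini interchange and the integrability of the resulting expectations in the presence of the singular Biot--Savart kernel $K_{2}(\xi,x)\sim(\xi-x)/|\xi-x|^{2}$; this is controlled by combining the smoothness and decay of $\omega_{0}$, $\phi_{0}$, $f$ with Gaussian-type upper bounds for the transition densities $p_{u;\mu}$ and $p_{u;\lambda+\mu}$, so that the singular integral defining $u(x,T)$ converges absolutely and the exchange of integrals is legitimate.
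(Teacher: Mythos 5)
Your proposal is correct and follows essentially the same route as the paper: specialise the 3D argument (Theorem \ref{thm5.1-1} via Theorem \ref{thm3.1}) to 2D, use the vanishing of $(\omega\cdot\nabla)u$ so that the gauge functional reduces to the scalar $A(\psi,T;t)=\mathrm{e}^{-\int_{t}^{T}\phi}$ which cancels against the weight $\mathrm{e}^{\int_{0}^{T}\phi}$, and then conclude by Biot--Savart and the splitting formula. You are in fact more explicit than the paper's one-line proof, and your computation of the cancellation (so that the initial-data term carries no exponential weight while the forcing term retains $\mathrm{e}^{\int_{0}^{t}\phi}$) is exactly what is needed to arrive at (\ref{2D-AC1}).
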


\begin{proof}
Indeed, for two dimensional flows, since $(\omega\cdot\nabla)u$ vanishes
identically, the gauge functional $R$ solves the following ordinary
differential equations 
\begin{equation}
\frac{\textrm{d}}{\textrm{d}t}R_{j}^{i}(\psi,T;t)=R_{j}^{i}(\psi,T;t)\phi(\psi(t),t)\label{com-rep2-1}
\end{equation}
and 
\begin{equation}
R_{j}^{i}(\psi,T;T)=\delta_{ij}.\label{com-rep3-1}
\end{equation}
which has the solution 
\[
R_{j}^{i}(\psi,T;t)=\delta_{ij}\textrm{e}^{\int_{T}^{t}\phi(\psi(s),s)\textrm{d}s}
\]
and therefore the integral representation from the previous representation
(3D case) immediately. 
\end{proof}
Based on the stochastic integral representation, Theorem \ref{thm5.1-1}
and Theorem \ref{thm5.1}, we may design numerical schemes for solving
the approximation motion equations for compressible fluid flows with
negligible fluid density gradient. We will use the so called multiple
Brownian fluid particle method, cf. \citep{LiQianXu2023}. These methods
have been studied by various researchers in the past but mainly for
incompressible flows, cf. \citep{Majda and Bertozzi 2002} for an
overview.

For simplicity we describe the random vortex and expansion-rate method
based on the vorticity and expansion-rate formulation for a 2D approximation
system of a compressible fluid flow with negligible fluid density
gradient, which is the system of partial differential equations 
\begin{equation}
\partial_{t}\rho+(u\cdot\nabla)\rho+\phi\rho=0,\label{AE-c1-3}
\end{equation}
\begin{equation}
(\mu+\lambda)\sum_{i,j}\frac{\partial}{\partial x^{j}}\left(\frac{1}{\rho}\delta_{ij}\frac{\partial}{\partial x^{i}}\phi\right)-(u\cdot\nabla)\phi-\partial_{t}\phi=0,\label{AE-c2-3}
\end{equation}
\begin{equation}
\mu\sum_{i,j}\frac{\partial}{\partial x^{j}}\left(\frac{1}{\rho}\delta_{ij}\frac{\partial}{\partial x^{i}}\omega\right)-(u\cdot\nabla)\omega-\phi\omega-\partial_{t}\omega=0\label{AE-c3-3}
\end{equation}
and 
\begin{equation}
\nabla\cdot u=\phi\quad\textrm{ and }\quad\nabla\wedge u=\omega.\label{AE-r1-3}
\end{equation}
Suppose that the initial data $u_{0}$ and the initial density $\rho_{0}$
(which is bounded, and is bounded away from zero), where $\nabla\rho_{0}$
is small. Introduce two types of Brownian particles, as in \citep{LiQianXu2023},
called $X$ and $Y$ particles, which are the diffusion processes
with infinitesimal generator $L_{u;\lambda+\mu}$ and $L_{u;\mu}$
respectively. The $X$ and $Y$ particles may be defined by solving
the stochastic differential equations: 
\begin{equation}
\textrm{d}X_{t}=\sqrt{2(\lambda+\mu)\rho^{-1}(X_{t},t)}\textrm{d}B_{t}^{1}+\left((\lambda+\mu)\nabla\rho^{-1}+u\right)(X_{t},t)\textrm{d}t,\quad X_{0}\in\mathbb{R}^{2}\label{X-particle}
\end{equation}
and 
\begin{equation}
\textrm{d}Y_{t}=\sqrt{2\mu\rho^{-1}(Y_{t},t)}\textrm{d}B_{t}^{2}+\left(\mu\nabla\rho^{-1}+u\right)(Y_{t},t)\textrm{d}t,\quad Y_{0}\in\mathbb{R}^{2}\label{Y-particle}
\end{equation}
on a probability space $(\varOmega,\mathcal{F},\mathbb{P})$, where
$B^{1}$ and $B^{2}$ are two independent Brownian motions in $\mathbb{R}^{2}$.
These diffusions may be called Brownian fluid particles with velocity
$u(x,t)$. We will use $X^{\xi}$ or $X(\xi,t)$ to denote the solution
to SDE (\ref{X-particle}) such that $X_{0}=\xi$, where $\xi\in\mathbb{R}^{2}$.
The same convention applies to $Y$-particles as well. Then, according
to Theorem \ref{thm5.1} 
\begin{align}
u(x,t) & =\int_{\mathbb{R}^{2}}\mathbb{P}\left[K_{2}(Y_{t}^{\eta},x)\wedge\omega_{0}(\eta)\right]\textrm{d}\eta\nonumber \\
 & +\int_{0}^{t}\int_{\mathbb{R}^{2}}\mathbb{P}\left[\textrm{e}^{\int_{0}^{s}\phi(Y_{r}^{\eta},r)\textrm{d}r}K_{2}(Y_{t}^{\eta},x)\wedge f(Y_{s}^{\eta},s)\right]\textrm{d}\eta\textrm{d}s\nonumber \\
 & -\int_{\mathbb{R}^{2}}\mathbb{P}\left[\textrm{e}^{\int_{0}^{t}\phi(X_{s}^{\eta},s)\textrm{d}s}K_{2}(X_{t}^{\eta},x)\right]\phi_{0}(\eta)\textrm{d}\eta,\label{2D-AC1-1}
\end{align}
together with the transport equation for $\rho^{-1}$: 
\begin{equation}
\partial_{t}\rho^{-1}+(u\cdot\nabla)\rho^{-1}-\phi\rho^{-1}=0,\label{a-eq1}
\end{equation}
we thus obtain an implicit system which allows to iterate to find
numerically the velocity $u(x,t)$.

In the previous random vortex scheme, we have to update $\nabla\rho^{-1}$
which requires to replace the singular kernel $K_{2}$ by its mollifiers.
While for flows with slowly varying fluid density, the term $\nabla\rho^{-1}$
(which is negligible), appearing together with the viscosity constants
which are also small, can be ignored in the scheme. Therefore we may
modify the $X$, $Y$ particles by solving the following SDEs: 
\begin{equation}
\textrm{d}X_{t}=\sqrt{2(\lambda+\mu)\rho^{-1}(X_{t},t)}\textrm{d}B_{t}^{1}+u(X_{t},t)\textrm{d}t,\quad X_{0}\in\mathbb{R}^{2}\label{X-particle-1}
\end{equation}
and 
\begin{equation}
\textrm{d}Y_{t}=\sqrt{2\mu\rho^{-1}(Y_{t},t)}\textrm{d}B_{t}^{2}+u(Y_{t},t)\textrm{d}t,\quad Y_{0}\in\mathbb{R}^{2}\label{Y-particle-1}
\end{equation}
instead. The modified scheme will greatly reduce the computational
cost, avoid the iteration of any derivatives of $\phi$, $\omega$
or $u$, and therefore no smoothing procedure for the singular kernel
$K_{2}$ is required. 

\section{Duality of conditional laws}

In this and the next sections, we develop the probabilistic tools
used for implementing the random vortex and expansion-rate method
for compressible flows worked out in the previous section.

Consider the following type of second order elliptic operator 
\begin{equation}
\mathscr{L}_{a;b,c}=\sum_{i,j=1}^{d}\frac{\partial}{\partial x^{j}}a^{ij}\frac{\partial}{\partial x^{i}}+\sum_{i=1}^{d}b^{i}\frac{\partial}{\partial x^{i}}+c\label{div-oper-01}
\end{equation}
in the Euclidean space $\mathbb{R}^{d}$, where $a(x,t)=(a^{ij}(x,t))_{i,j\leq d}$
(for $(x,t)\in\mathbb{R}^{d}\times[0,\infty)$) is a smooth $d\times d$
symmetric matrix-valued function with bounded derivatives which satisfies
the \emph{uniformly elliptic} condition that there is a constant $\lambda\geq1$
such that 
\begin{equation}
\lambda^{-1}|\xi|^{2}\leq\sum_{i,j=1}^{d}\xi^{i}\xi^{j}a^{ij}(x,t)\leq\lambda|\xi|^{2}\label{u-c01}
\end{equation}
for every $\xi=(\xi^{1},\cdots,\xi^{d})\in\mathbb{R}^{d}$ and $(x,t)\in\mathbb{R}^{d}\times[0,\infty)$,
$b(x,t)=(b^{1}(x,t),\ldots,b^{d}(x,t))$ is a smooth time-dependent
vector field, and $c(x,t)$ is a continuous bounded scalar function
on $\mathbb{R}^{d}\times[0,\infty)$.

It is easy to see that the adjoint $\mathscr{L}_{a;b,c}^{\star}=\mathscr{L}_{a;-b,c-\nabla\cdot b}$.

If $c=0$, then $\mathscr{L}_{a;b,0}$ is a diffusion operator, denoted
by $\mathscr{L}_{a;b}$, and its adjoint $\mathscr{L}_{a;b}^{\star}$
equals $\mathscr{L}_{a;-b,-\nabla\cdot b}$, which is however no longer
a diffusion operator unless $\nabla\cdot b=0$. In particular we can
not expect the duality (\ref{Duality-01}) for a vector field with
non-trivial divergence. We are going to establish a new duality theorem
among $\mathscr{L}_{a;b}$-diffusion pinned measures for arbitrary
time-dependent vector fields $b(x,t)$ and diffusion coefficients
$a(x,t)$. To this end we recall first several fundamental probabilistic
structures determined by the elliptic operator $\mathscr{L}_{a;b}$.

Under our assumptions on $a,b$ and $c$, there is a unique fundamental
solution $\varSigma_{a;b,c}(x,t;\xi,\tau)$ where $0\leq\tau<t\leq T$
(resp. $\varSigma_{a;b,c}^{\star}(x,t;\xi,\tau)$ for $0\leq t<\tau\leq T$)
of $\mathscr{L}_{a;b,c}-\partial_{t}$ (resp. $\mathscr{L}_{a;b,c}+\partial_{t}$).
That is 
\[
\left(\mathscr{L}_{a;b,c}-\partial_{t}\right)\varSigma_{a;b,c}(x,t;\xi,\tau)=0\;\textrm{ in }\mathbb{R}^{d}\times(\tau,T]
\]
for every pair $(\xi,\tau)\in\mathbb{R}^{d}\times[0,T]$, (resp: 
\[
\left(\mathscr{L}_{a;b,c}+\partial_{t}\right)\varSigma_{a;b,c}^{\star}(x,t;\xi,\tau)=0\;\textrm{ in }\mathbb{R}^{d}\times[0,\tau)
\]
for $(\xi,\tau)\in\mathbb{R}^{d}\times[0,T]$), and for every $\varphi\in C_{b}(\mathbb{R}^{d})$
\[
\lim_{t\downarrow\tau}\int_{\mathbb{R}^{d}}\varSigma_{a;b,c}(x,t;\xi,\tau)\varphi(\xi)\textrm{d}\xi=\varphi(x)
\]
(resp. 
\[
\lim_{t\uparrow\tau}\int_{\mathbb{R}^{d}}\varSigma_{a;b,c}^{\star}(x,t;\xi,\tau)\varphi(\xi)\textrm{d}\xi=\varphi(x)
\]
for every $x\in\mathbb{R}^{d}$).
\begin{rem}
Similarly there are unique fundamental solutions associated with $\mathscr{L}_{a;b,c}-\partial_{t}$
and $\mathscr{L}_{a;b,c}+\partial_{t}$ on a domain $D\subset\mathbb{R}^{d}$
with Lipschitz continuous boundary $\partial D$, subject to the Dirichlet
boundary condition at $\partial D$, cf. \citep{Friedman 1964}. 
\end{rem}

The following properties about the fundamental solutions are important
in our study, which can be easily verified, cf. \citep{Friedman 1964}
for details.

\vskip0.3cm

1) For every $T>0$, $\varSigma_{a;b,c}^{\star}(x,T-t;\xi,T-\tau)$
(where $0\leq\tau<t\leq T$ and $x,\xi\in\mathbb{R}^{d}$) is a fundamental
solution to $\mathscr{L}_{a^{T};b^{T},c^{T}}-\partial_{t}$, so that
\begin{equation}
\varSigma_{a;b,c}^{\star}(x,T-t;\xi,T-\tau)=\varSigma_{a^{T};b^{T},c^{T}}(x,t;\xi,\tau)\label{duality-31}
\end{equation}
for any $\xi,x\in\mathbb{R}^{d}$ and $0\leq\tau<t\leq T$, where
we have used the convention that if $f(x,t)$ is a time dependent
tensor field, then $f^{T}(x,t)=f(x,(T-t)^{+})$.

\vskip0.3cm

2) Since $\mathscr{L}_{a;b,c}^{\star}=\mathscr{L}_{a;-b,c-\nabla\cdot b}$,
the forward and backward solutions satisfy the following relation:
\begin{equation}
\varSigma_{a;-b,c-\nabla\cdot b}^{\star}(\xi,\tau;x,t)=\varSigma_{a;b,c}(x,t;\xi,\tau)\label{duality-32}
\end{equation}
for all $\xi,x\in\mathbb{R}^{d}$ and $0\leq\tau<t$, cf. \citep[Theorem 15, page 28]{Friedman 1964}.
Therefore 
\begin{equation}
\varSigma_{a;b,c}^{\star}(x,T-t;\xi,T-\tau)=\varSigma_{a^{T};-b^{T},c^{T}-\nabla\cdot b^{T}}^{\star}(\xi,\tau;x,t)\label{duality-33}
\end{equation}
for $0\leq\tau<t\leq T$ and $x,\xi\in\mathbb{R}^{d}$.

\vskip0.3cm

3) We will use the following special case of (\ref{duality-33}):
\begin{equation}
\varSigma_{a;b}^{\star}(x,T-t;\xi,T-\tau)=\varSigma_{a^{T};-b^{T},-\nabla\cdot b^{T}}^{\star}(\xi,\tau;x,t)\label{duality-34}
\end{equation}
for all $\xi,x\in\mathbb{R}^{d}$ and for $0\leq\tau<t\leq T$.

\vskip0.4cm

Let $\varOmega=C([0,\infty),\mathbb{R}^{d})$ and $X=(X_{t})_{t\geq0}$
be the coordinate process on $\varOmega$. Let $\mathcal{F}_{t}^{0}=\sigma\{X_{s}:s\leq t\}$
and $\mathcal{F}^{0}=\sigma\{X_{s}:s\geq0\}$ which coincides with
the Borel $\sigma$-algebra on $\varOmega$. For every $\xi\in\mathbb{R}^{d}$
and $\tau\geq0$ there is a unique probability measures $\mathbb{P}_{a;b}^{\xi,\tau}$
on $(\varOmega,\mathcal{F}^{0})$ such that $X=(X_{t})_{t\geq0}$
is a diffusion with generator $\mathscr{L}_{a;b}$ and 
\[
\mathbb{P}_{a;b}^{\xi,\tau}\left[X_{s}=\xi\textrm{ for all }0\leq s\leq\tau\right]=1.
\]
The distribution of $X_{t}$ (for $t>\tau$) under $\mathbb{P}_{a;b}^{\xi,\tau}$
has a density with respect to the Lebesgue measure denoted by $p_{a;b}(\tau,\xi;t,x)$.
It is a matter of fact that 
\begin{equation}
p_{a;b}(\tau,\xi;t,x)=\varSigma_{a;b}^{\star}(\xi,\tau;x,t)=\varSigma_{a;-b,-\nabla\cdot b}(x,t;\xi,\tau)\label{pdf-back-01}
\end{equation}
for all $0\leq\tau<t$ and $x,\xi\in\mathbb{R}^{d}$, cf. \citep{Stroock and Varadhan 1979}.

Since the diffusion coefficient $a(x,t)=(a^{ij}(x,t))$ is uniformly
elliptic and $b(x,t)$ is bounded, so that $p_{a;b}(\tau,\xi;t,x)$
is strictly positive and is locally Hölder's continuous in $t>\tau\geq0$,
and $\xi,x\in\mathbb{R}^{d}$. The conditional laws $\mathbb{P}_{a;b}^{\xi,\tau}\left[\cdot|X_{T}=\eta\right]$
may be constructed point-wise in $(\xi,\eta)$, which may be described
as the following.

Let $T>\tau\geq0$ be arbitrary but fixed. According to \citep[(14.1) on page 161]{Dellacherie and Meyer Volume D},
the conditional law (or called the pinned measure) $\mathbb{P}_{a;b}^{\xi,\tau\rightarrow\eta,T}$
is the unique probability measure on $\varOmega$ such that 
\[
\mathbb{P}_{a;b}^{\xi,\tau\rightarrow\eta,T}\left[X_{s}=\xi\textrm{ for }s\leq\tau\textrm{ and }X_{t}=\eta\textrm{ for }t\geq T\right]=1,
\]
and it possesses Markov property with a transition probability density
function 
\begin{equation}
q_{a;b}(s,x;t,y)=\frac{p_{a;b}(s,x;t,y)p_{a;b}(t,y;T,\eta)}{p_{a;b}(s,x;T,\eta)}\label{q-transit}
\end{equation}
for $\tau\leq s<t\leq T$.

Without losing the generality, we may set $\tau=0$ in our discussion
below. The finite dimensional marginal distribution 
\[
\mathbb{P}_{a;b}^{\xi,0\rightarrow\eta,T}\left[X_{t_{1}}\in\textrm{d}x_{1},\cdots,X_{t_{k}}\in\textrm{d}x_{k}\right],
\]
where $0=t_{0}<t_{1}<\cdots<t_{k}<t_{k+1}=T$, is the measure on $\mathbb{R}^{d\times k}$
with its Borel $\sigma$-algebra, given by 
\begin{equation}
\frac{p_{a;b}(t_{k},x_{k};T,\eta)}{p_{a;b}(0,\xi;T,\eta)}p_{a;b}(0,\xi;t_{1},x_{1})p_{a;b}(t_{1},x_{1};t_{2},x_{2})\cdots p_{a;b}(t_{k-1},x_{k-1};t_{k},x_{k})\textrm{d}x_{1}\cdots\textrm{d}x_{k}.\label{c-d--2as}
\end{equation}
Therefore $\mathbb{P}_{a;b}^{\xi,0\rightarrow\eta,T}$ is absolutely
continuous with respect to the diffusion measure $\mathbb{P}_{a;b}^{\xi,0}$
on $\mathcal{F}_{t}^{0}$ and 
\begin{equation}
\left.\frac{\textrm{d}\mathbb{P}_{a;b}^{\xi,0\rightarrow\eta,T}}{\textrm{d}\mathbb{P}_{a;b}^{\xi,0}}\right|_{\mathcal{F}_{t}^{0}}=\frac{p_{a;b}(t,X_{t};T,\eta)}{p_{a;b}(0,\xi;T,\eta)}\label{des-01}
\end{equation}
for $0\leq t<T$. It follows that for every pair $\eta\in\mathbb{R}^{d}$
and $T>0$, the diffusion bridge measure $\mathbb{P}_{a;b}^{\xi,0\rightarrow\eta,T}$
is a diffusion with its infinitesimal generator 
\begin{equation}
\mathscr{L}_{a;b}^{(\eta,T)}=\mathscr{L}_{a;b}+2\sum_{i,k}a^{ki}(x,t)\frac{\partial}{\partial x^{k}}\ln p_{a;b}(t,x;T,\eta)\frac{\partial}{\partial x^{i}}\label{pinned-02}
\end{equation}
and its support $\varOmega_{T}\equiv C([0,T];\mathbb{R}^{d})$ which
is embedded canonically in $\varOmega$.

It is important to notice that the splitting integration formula holds:
\begin{align}
\mathbb{P}_{a;b}^{\xi,0}\left[F\right] & =\int_{\mathbb{R}^{d}}\mathbb{P}_{a;b}^{\xi,0}\left[F|X_{T}=\eta\right]\mathbb{P}_{a;b}^{\xi,0}\left[X_{T}\in\textrm{d}\eta\right]\nonumber \\
 & =\int_{\mathbb{R}^{d}}\mathbb{P}_{a;b}^{\xi,0\rightarrow\eta,T}\left[F\right]p_{a;b}(0,\xi;T,\eta)\textrm{d}\eta\label{cd-s1}
\end{align}
for any integrable $F\in\mathcal{F}_{T}^{0}$.

For every $T>0$, $\tau_{T}$ denotes the time inverse operation on
$\varOmega_{T}$ which sends a path $x(t)$ to $x((T-t)^{+})$.

We are now in the position to state a key result in this paper, which
is about the duality of conditional laws of diffusion processes. 
\begin{thm}
\label{thm2.2}Under the assumptions on $a$ and $b,$and assume that
$\nabla\cdot b$ is bounded. Let $T>0$ and $\xi,\eta\in\mathbb{R}^{d}$.
Then the probability measures $\mathbb{P}_{a^{T};-b^{T}}^{\eta,0\rightarrow\xi,T}\circ\tau_{T}$
and $\mathbb{P}_{a;b}^{\xi,0\rightarrow\eta,T}$ are absolutely continuous
with each other, and 
\begin{equation}
\frac{\textrm{d}\mathbb{P}_{a^{T};-b^{T}}^{\eta,0\rightarrow\xi,T}\circ\tau_{T}}{\textrm{d}\mathbb{P}_{a;b}^{\xi,0\rightarrow\eta,T}}=\frac{\textrm{e}^{\int_{0}^{T}\nabla\cdot b(X_{s},s)\textrm{d}s}}{\mathbb{P}_{a;b}^{\xi,0\rightarrow\eta,T}\left[\textrm{e}^{\int_{0}^{T}\nabla\cdot b(X_{s},s)\textrm{d}s}\right]},\label{Dual-D1}
\end{equation}
where $X=(X_{t})_{t\geq0}$ denotes the coordinate process on $\varOmega$. 
\end{thm}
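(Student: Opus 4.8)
The plan is to compare the two measures by testing them against cylinder functions, i.e.\ against finite-dimensional marginals, and then to invoke the splitting integration formula \eqref{cd-s1} to extend the identity from the pinned measures to a statement that does not require knowing the Radon--Nikodym constant in advance. First I would fix a partition $0=t_0<t_1<\cdots<t_k<t_{k+1}=T$ and write down explicitly the finite-dimensional marginal of $\mathbb{P}_{a^{T};-b^{T}}^{\eta,0\rightarrow\xi,T}\circ\tau_{T}$ at the times $t_1,\dots,t_k$. Using the defining formula \eqref{c-d--2as} for the bridge marginals of the $\mathscr{L}_{a^{T};-b^{T}}$-diffusion pinned from $\eta$ at time $0$ to $\xi$ at time $T$, together with the time-reversal $\tau_T$ which sends $t_j$ to $T-t_j$, this marginal becomes a product of transition densities $p_{a^{T};-b^{T}}(T-t_{j+1},x_{j+1};T-t_j,x_j)$ divided by $p_{a^{T};-b^{T}}(0,\eta;T,\xi)$.

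The key computational step is to rewrite each factor $p_{a^{T};-b^{T}}(s,x;t,y)$ in terms of $p_{a;b}$. Here I would chain together the identities already recorded in the excerpt: by \eqref{pdf-back-01}, $p_{a^{T};-b^{T}}(s,x;t,y)=\varSigma_{a^{T};-b^{T}}^{\star}(x,s;y,t)$, and then \eqref{duality-32} (applied with $b$ replaced by $-b^{T}$) turns this into a \emph{backward} fundamental solution of the operator with drift $+b^{T}$ and zeroth-order term $\nabla\cdot b^{T}$, namely $\varSigma_{a^{T};b^{T},\nabla\cdot b^{T}}(y,t;x,s)$. Next, the time-shift property \eqref{duality-31}--\eqref{duality-33} converts the $a^{T},b^{T}$ fundamental solution on $[0,T]$ into the $a,b$ fundamental solution with reversed time arguments; combined with the Feynman--Kac identity for the zeroth-order term $\nabla\cdot b$, each ratio of $p_{a^{T};-b^{T}}$ factors produces a ratio of $p_{a;b}$ factors times an exponential weight $\mathrm{e}^{\int \nabla\cdot b(X_s,s)\,\mathrm{d}s}$ accumulated over the relevant time interval. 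Telescoping over the partition, the product of $p_{a;b}$ transition densities reassembles exactly into the finite-dimensional marginal \eqref{c-d--2as} of $\mathbb{P}_{a;b}^{\xi,0\rightarrow\eta,T}$, while the exponential weights add up to $\mathrm{e}^{\int_0^T \nabla\cdot b(X_s,s)\,\mathrm{d}s}$, and the normalising ratio $p_{a^{T};-b^{T}}(0,\eta;T,\xi)/p_{a;b}(0,\xi;T,\eta)$ collects the remaining constant.

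Having obtained, for every cylinder function $F$, the identity $\mathbb{P}_{a^{T};-b^{T}}^{\eta,0\rightarrow\xi,T}\circ\tau_{T}[F]=\mathbb{P}_{a;b}^{\xi,0\rightarrow\eta,T}\big[F\,\mathrm{e}^{\int_0^T \nabla\cdot b(X_s,s)\,\mathrm{d}s}\big]\cdot c(\xi,\eta,T)$ with $c(\xi,\eta,T)=p_{a^{T};-b^{T}}(0,\eta;T,\xi)/p_{a;b}(0,\xi;T,\eta)$, I would determine the constant $c$ by taking $F\equiv 1$: since the left side is $1$, we get $c(\xi,\eta,T)=1/\mathbb{P}_{a;b}^{\xi,0\rightarrow\eta,T}\big[\mathrm{e}^{\int_0^T\nabla\cdot b(X_s,s)\,\mathrm{d}s}\big]$, which is \eqref{Dual-D1}. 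A monotone-class / Dynkin argument then extends the identity from cylinder functions to all bounded $\mathcal{F}_T^0$-measurable $F$, and boundedness of $\nabla\cdot b$ guarantees the exponential weight lies in every $L^p$, so the Radon--Nikodym derivative is genuinely integrable and bounded away from $0$ and $\infty$; this gives mutual absolute continuity. Alternatively, one can verify \eqref{Dual-D1} directly on $\mathcal{F}_t^0$ for $t<T$ using the descriptions \eqref{des-01} of each bridge measure relative to its underlying diffusion measure, combined with the known relation between $\mathbb{P}_{a;b}^{\xi,0}$ and $\mathbb{P}_{a^{T};-b^{T}}^{\eta,0}\circ\tau_T$ at the level of diffusion (non-pinned) measures — but that relation is itself essentially the Girsanov/time-reversal computation above, so I expect the finite-dimensional-marginal route to be the cleanest.

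The main obstacle I anticipate is bookkeeping the time-reversal correctly in the fundamental-solution identities: one must be careful that $a^T(x,t)=a(x,(T-t)^+)$ and $b^T$ are evaluated at the \emph{reversed} times, that $\tau_T$ reverses the order of the partition points, and that the divergence term $\nabla\cdot b^T$ that appears from the adjoint in \eqref{duality-32} is exactly what the Feynman--Kac exponential in \eqref{Dual-D1} accounts for — a sign error or a mismatch between $\nabla\cdot b^T$ and $(\nabla\cdot b)^T$ would corrupt the final formula. A secondary technical point is justifying the passage from finite-dimensional marginals to the full $\sigma$-algebra $\mathcal{F}_T^0$ and checking that both pinned measures are genuinely supported on $\varOmega_T$ so that $\tau_T$ is well defined $\mathbb{P}_{a^T;-b^T}^{\eta,0\to\xi,T}$-a.s.; this is routine given the strict positivity and local Hölder continuity of the transition densities already noted in the excerpt.
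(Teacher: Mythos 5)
Your proposal follows essentially the same route as the paper: compute the finite-dimensional marginals of the time-reversed bridge, convert each transition density via the fundamental-solution dualities and the Feynman--Kac identity for the zeroth-order term $\nabla\cdot b$ (the paper's Lemmas \ref{lem2.3} and \ref{lem5.2}), and fix the normalisation by testing against $F\equiv 1$. The one step you state loosely is that the exponential weights ``add up'' to $\mathrm{e}^{\int_0^T\nabla\cdot b(X_s,s)\,\mathrm{d}s}$: the conversion actually produces on each subinterval the deterministic bridge expectation $\mathbb{P}_{a;b}^{x_{i-1},t_{i-1}\rightarrow x_i,t_i}\left[\mathrm{e}^{\int_{t_{i-1}}^{t_i}\nabla\cdot b(X_s,s)\,\mathrm{d}s}\right]$, and identifying the product of these with the conditional expectation of the full exponential given the partition values is precisely the content of the paper's Lemma \ref{lem2.6} together with the Markov-property limiting argument in its proof of Theorem \ref{thm2.2} --- a correct but nontrivial tower-property step worth making explicit.
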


To prove the duality (\ref{Dual-D1}) we first establish several facts
about the measure $\mathbb{P}_{a^{T};-b^{T}}^{\eta,0\rightarrow\xi,T}\circ\tau_{T}$. 
\begin{lem}
\textup{\label{lem2.3} For a partition} $0=t_{0}<t_{1}<\ldots<t_{k}<t_{k+1}=T$\textup{,
the finite dimensional distribution
\[
\mathbb{P}_{a^{T};-b^{T}}^{\eta,0\rightarrow\xi,T}\circ\tau_{T}\left[X_{t_{1}}\in\textrm{d}x_{k},\cdots,X_{t_{k}}\in\textrm{d}x_{1}\right]
\]
}equals 
\begin{equation}
\frac{\varSigma_{a;b,\nabla\cdot b}^{\star}(x_{k},t_{k};\eta,T)}{\varSigma_{a;b,\nabla\cdot b}^{\star}(\xi,0;\eta,T)}\prod_{j=1}^{k}\varSigma_{a;b,\nabla\cdot b}^{\star}(x_{j-1},t_{j-1};x_{j},t_{j})\textrm{d}x_{1}\cdots\textrm{d}x_{k}\label{M-DE01}
\end{equation}
where $x_{0}=\xi$ and $x_{k+1}=\eta$. 
\end{lem}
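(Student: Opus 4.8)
The plan is to transport the explicit finite--dimensional law of the pinned diffusion $\mathbb{P}_{a^{T};-b^{T}}^{\eta,0\rightarrow\xi,T}$, recorded in (\ref{c-d--2as}), through the reversal map $\tau_{T}$, and then rewrite every transition density that appears in terms of the backward fundamental solution $\varSigma_{a;b,\nabla\cdot b}^{\star}$. First note that on $[0,T]$ the coefficients $a^{T},b^{T}$ satisfy the same standing hypotheses as $a,b$, that $\nabla\cdot b^{T}=(\nabla\cdot b)^{T}$ (the divergence acting only in the space variables), and that the assumed boundedness of $\nabla\cdot b$ permits $\nabla\cdot b$ to serve as the zeroth--order coefficient $c$ in the definition of the fundamental solution, so $\varSigma_{a;b,\nabla\cdot b}^{\star}$ is well defined and, by uniform ellipticity, strictly positive.

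The key preliminary identity is
\[
p_{a^{T};-b^{T}}(\tau,\zeta;t,z)=\varSigma_{a;b,\nabla\cdot b}^{\star}(z,T-t;\zeta,T-\tau),\qquad 0\le\tau<t\le T,
\]
which follows by combining (\ref{pdf-back-01}), applied with $a,b$ replaced by $a^{T},-b^{T}$, with (\ref{duality-33}) applied with $c=\nabla\cdot b$: then $c^{T}-\nabla\cdot b^{T}=(\nabla\cdot b)^{T}-\nabla\cdot b^{T}=0$, so the right--hand side of (\ref{duality-33}) collapses to $\varSigma_{a^{T};-b^{T}}^{\star}(\zeta,\tau;z,t)=p_{a^{T};-b^{T}}(\tau,\zeta;t,z)$.

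Next, fix the mesh $0=t_{0}<\cdots<t_{k+1}=T$. By (\ref{c-d--2as}), applied with $a,b$ replaced by $a^{T},-b^{T}$ and with starting point $\eta$ and terminal point $\xi$, for any increasing times $0=s_{0}<\cdots<s_{k+1}=T$ the joint density of $(X_{s_{1}},\ldots,X_{s_{k}})$ under $\mathbb{P}_{a^{T};-b^{T}}^{\eta,0\rightarrow\xi,T}$ at $(v_{1},\ldots,v_{k})$ equals $\bigl(p_{a^{T};-b^{T}}(0,\eta;T,\xi)\bigr)^{-1}\prod_{i=0}^{k}p_{a^{T};-b^{T}}(s_{i},v_{i};s_{i+1},v_{i+1})$ with the conventions $v_{0}=\eta$, $v_{k+1}=\xi$. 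Since $(\mathbb{Q}\circ\tau_{T})[X_{t}\in\cdot\,]=\mathbb{Q}[X_{T-t}\in\cdot\,]$ for any law $\mathbb{Q}$ on $\varOmega_{T}$, and since $t_{1}<\cdots<t_{k}$ forces $0<T-t_{k}<\cdots<T-t_{1}<T$, the event $\{X_{t_{1}}\in\textrm{d}x_{k},\ldots,X_{t_{k}}\in\textrm{d}x_{1}\}$ under $\mathbb{P}_{a^{T};-b^{T}}^{\eta,0\rightarrow\xi,T}\circ\tau_{T}$ becomes, under $\mathbb{P}_{a^{T};-b^{T}}^{\eta,0\rightarrow\xi,T}$, the event that the diffusion visits the space--time points $(\eta,0),(x_{1},T-t_{k}),\ldots,(x_{k},T-t_{1}),(\xi,T)$ in that (time--increasing) order --- this is precisely the purpose of the reversed pairing of the $x_{i}$ with the $t_{j}$ in the statement. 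Substituting $s_{i}=T-t_{k+1-i}$, $v_{i}=x_{i}$ into the formula above and then inserting the preliminary identity turns each $p_{a^{T};-b^{T}}(s_{i},v_{i};s_{i+1},v_{i+1})$ into $\varSigma_{a;b,\nabla\cdot b}^{\star}(v_{i+1},T-s_{i+1};v_{i},T-s_{i})$; since $T-s_{i}=t_{k+1-i}$ the time arguments revert to the original mesh, the chain is re--oriented, and collecting the factors in time order, with $p_{a^{T};-b^{T}}(0,\eta;T,\xi)=\varSigma_{a;b,\nabla\cdot b}^{\star}(\xi,0;\eta,T)$ supplying the normalising denominator, reproduces the product displayed in (\ref{M-DE01}).

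The one genuinely delicate point is this bookkeeping: the computation carries two superimposed reversals --- the time reversal built into $\tau_{T}$, and the space--time reversal hidden in $p_{a^{T};-b^{T}}(\tau,\zeta;t,z)=\varSigma_{a;b,\nabla\cdot b}^{\star}(z,T-t;\zeta,T-\tau)$ --- and one must keep careful track of how the space labels $x_{i}$, the nodes $t_{j}$ and the endpoints $\xi,\eta$ get permuted so that the two reversals compose into the forward chain of (\ref{M-DE01}); carrying out $k=1$ and then $k=2$ explicitly first is the safe way to fix all the conventions. Everything else is quoted: existence, strict positivity and local continuity of the transition densities and of $\varSigma_{a;b,\nabla\cdot b}^{\star}$, the Markov property of the diffusion bridge, and the fundamental--solution duality relations established earlier in this section.
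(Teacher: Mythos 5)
Your proposal is correct and follows essentially the same route as the paper: push the explicit finite-dimensional density \eqref{c-d--2as} of the bridge $\mathbb{P}_{a^{T};-b^{T}}^{\eta,0\rightarrow\xi,T}$ through $\tau_{T}$ and convert each factor via the identity $p_{a^{T};-b^{T}}(\tau,\zeta;t,z)=\varSigma_{a;b,\nabla\cdot b}^{\star}(z,T-t;\zeta,T-\tau)$, which the paper merely asserts but you correctly derive from \eqref{pdf-back-01} and \eqref{duality-33} with $c=\nabla\cdot b$. The index bookkeeping you flag as delicate is indeed the only subtlety (the paper itself is not fully consistent between the pairing $X_{t_{j}}\leftrightarrow x_{k+1-j}$ in the lemma's display and the pairing $X_{t_{j}}\leftrightarrow x_{j}$ used in its proof and implicit in \eqref{M-DE01}), but your mechanism --- composing the path reversal with the space--time reversal in the density identity to re-orient the chain from $\xi$ at time $0$ to $\eta$ at time $T$ --- is exactly the paper's "change of variable $x_{i}$ to $x_{k+1-i}$" step.
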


\begin{proof}
The finite dimensional distribution by definition 
\[
\mathbb{P}_{a^{T};-b^{T}}^{\eta,0\rightarrow\xi,T}\circ\tau_{T}\left[X_{t_{1}}\in\textrm{d}x_{1},\cdots,X_{t_{k}}\in\textrm{d}x_{k}\right]
\]
equals 
\[
\mathbb{P}_{a^{T};-b^{T}}^{\eta,0\rightarrow\xi,T}\left[X_{T-t_{k}}\in\textrm{d}x_{k},\cdots,X_{T-t_{1}}\in\textrm{d}x_{1}\right]
\]
which is absolutely continuous with respect to the Lebesgue measure
on $\mathbb{R}^{d\times k}$ given by 
\begin{equation}
\frac{p_{a^{T},-b^{T}}(T-t_{1},x_{k};T,\xi)}{p_{a^{T},-b^{T}}(0,\eta;T,\xi)}\prod_{j=1}^{k}p_{a^{T},-b^{T}}(T-t_{j+1},x_{k-j};T-t_{j},x_{k+1-j})\textrm{d}x_{1}\cdots\textrm{d}x_{k}\label{h-c-2-1}
\end{equation}
Using the relation that 
\[
p_{a^{T};-b^{T}}(T-t,y;T-s,x)=\varSigma_{a;b,\nabla\cdot b}^{\star}(x,s;y,t)
\]
for any $0\leq s<t\leq T$, and making change variable $x_{i}$ to
$x_{k+1-i}$ ($i=1,\cdots,k$), the equality (\ref{M-DE01}) follows
immediately. 
\end{proof}
The second tool we need for proving the conditional law duality is
the Feynman-Kac formula for solutions to the backward problem of linear
parabolic equations. We formulate a general version as we will need
this in dealing with compressible fluid flows.

Let $\varPhi=(\varPhi^{i})_{i=1,\ldots,n}$ be a solution to the (backward
problem of) parabolic system 
\begin{equation}
(\mathscr{L}_{a;b}+\partial_{t})\varPhi^{i}-\sum_{j=1}^{n}\varTheta_{j}^{i}\varPhi^{j}+F^{i}=0\quad\textrm{ in }\mathbb{R}^{d}\times[0,\infty),\label{bp-fey-01}
\end{equation}
where $i=1,\ldots,n$, and $\varTheta_{j}^{i}$ and $F^{i}$ are bounded
Borel measurable functions.

For each continuous path $\psi\in C([0,\infty),\mathbb{R}^{d})$ and
every $\tau\geq0$, $t\mapsto Q(\tau,\psi;t)=(Q_{j}^{i}(\tau,\psi;t))_{i,j\leq n}$
denotes the unique solution of the linear system of ordinary differential
equations: 
\begin{equation}
\frac{\textrm{d}}{\textrm{d}t}Q_{j}^{i}(\tau,\psi;t)+Q_{k}^{i}(\tau,\psi;t)\varTheta_{j}^{k}(\psi(t),t)=0,\quad Q_{j}^{i}(\tau,\psi;\tau)=\delta_{j}^{i}.\label{gu-01}
\end{equation}

The following result is the well-known Feynman-Kac formula. 
\begin{lem}
\label{lem5.1} Suppose $\varPhi(x,t)$ is a $C^{2,1}$ solution to
\eqref{bp-fey-01} with bounded first derivatives. Then 
\begin{equation}
\varPhi^{i}(\xi,\tau)=\mathbb{P}_{a;b}^{\xi,\tau}\left[Q_{j}^{i}(\tau,X;t)\varPhi^{j}(X_{t},t)\right]+\mathbb{P}_{a;b}^{\xi,\tau}\left[\int_{\tau}^{t}Q_{j}^{i}(\tau,X;s)F^{j}(X_{s},s)\textrm{d}s\right]\label{Fey-01}
\end{equation}
where $i=1,\ldots,n$, $t>\tau$ and $\xi\in\mathbb{R}^{d}$. 
\end{lem}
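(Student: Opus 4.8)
The plan is to apply It\^o's formula to the process $s\mapsto Q_{j}^{i}(\tau,X;s)\,\varPhi^{j}(X_{s},s)$ on $[\tau,t]$ and then take expectations, the finite-variation contributions of the two factors cancelling by virtue of \eqref{bp-fey-01} and \eqref{gu-01}. Fix $\tau\geq0$, $\xi\in\mathbb{R}^{d}$ and $t>\tau$, and work under $\mathbb{P}_{a;b}^{\xi,\tau}$, under which $X$ is a diffusion with generator $\mathscr{L}_{a;b}$ that is constant equal to $\xi$ on $[0,\tau]$. For fixed $i,j$ the map $s\mapsto Q_{j}^{i}(\tau,X;s)$ on $[\tau,t]$ is, path by path, the unique solution of the linear ODE \eqref{gu-01}; since $\varTheta$ is bounded, a Gr\"onwall estimate bounds $\sup_{\tau\leq s\leq t}|Q(\tau,X;s)|$ by a deterministic constant depending only on $\|\varTheta\|_{\infty}$ and $t-\tau$, and this map is absolutely continuous with derivative $\frac{\textrm{d}}{\textrm{d}s}Q_{j}^{i}(\tau,X;s)=-Q_{k}^{i}(\tau,X;s)\,\varTheta_{j}^{k}(X_{s},s)$.

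By It\^o's formula applied to $\varPhi^{j}\in C^{2,1}$, the process
\[
N_{s}^{j}:=\varPhi^{j}(X_{s},s)-\varPhi^{j}(\xi,\tau)-\int_{\tau}^{s}\big((\mathscr{L}_{a;b}+\partial_{r})\varPhi^{j}\big)(X_{r},r)\,\textrm{d}r,\qquad s\in[\tau,t],
\]
is a continuous local martingale under $\mathbb{P}_{a;b}^{\xi,\tau}$, whose quadratic variation is absolutely continuous with density bounded---using the upper bound in \eqref{u-c01} and the boundedness of $\nabla\varPhi$---by the constant $2\lambda\|\nabla\varPhi\|_{\infty}^{2}$. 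Writing $Q_{j}^{i}(s)$ for $Q_{j}^{i}(\tau,X;s)$ and combining the product rule (there is no bracket term since $s\mapsto Q_{j}^{i}(s)$ has finite variation), the ODE $\textrm{d}Q_{j}^{i}(s)=-Q_{k}^{i}(s)\varTheta_{j}^{k}(X_{s},s)\textrm{d}s$, It\^o's formula $\textrm{d}\varPhi^{j}(X_{s},s)=\big((\mathscr{L}_{a;b}+\partial_{s})\varPhi^{j}\big)(X_{s},s)\textrm{d}s+\textrm{d}N_{s}^{j}$, and the equation \eqref{bp-fey-01} in the form $(\mathscr{L}_{a;b}+\partial_{s})\varPhi^{j}=\varTheta_{k}^{j}\varPhi^{k}-F^{j}$, one finds that the two drift terms proportional to $\varTheta$, namely $-Q_{k}^{i}(s)\varTheta_{j}^{k}(X_{s},s)\varPhi^{j}(X_{s},s)$ and $Q_{j}^{i}(s)\varTheta_{k}^{j}(X_{s},s)\varPhi^{k}(X_{s},s)$, are equal and opposite after interchanging the summation labels $j$ and $k$. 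Hence $\textrm{d}\big(Q_{j}^{i}(s)\varPhi^{j}(X_{s},s)\big)=-Q_{j}^{i}(s)F^{j}(X_{s},s)\textrm{d}s+Q_{j}^{i}(s)\textrm{d}N_{s}^{j}$, and integrating over $[\tau,t]$ while using $Q_{j}^{i}(\tau)=\delta_{j}^{i}$ and $X_{\tau}=\xi$ gives the pathwise identity
\[
\varPhi^{i}(\xi,\tau)=Q_{j}^{i}(\tau,X;t)\varPhi^{j}(X_{t},t)+\int_{\tau}^{t}Q_{j}^{i}(\tau,X;s)F^{j}(X_{s},s)\,\textrm{d}s-\int_{\tau}^{t}Q_{j}^{i}(\tau,X;s)\,\textrm{d}N_{s}^{j}.
\]

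It remains to take $\mathbb{P}_{a;b}^{\xi,\tau}$-expectations; the only point needing justification is that the stochastic integral has mean zero, everything else being routine It\^o calculus. The bound on $\langle N^{j}\rangle$ above makes each $N^{j}$ an $L^{2}$-bounded martingale on $[\tau,t]$, and since $Q_{j}^{i}(\tau,X;\cdot)$ is bounded (first paragraph), $s\mapsto\int_{\tau}^{s}Q_{j}^{i}(\tau,X;r)\,\textrm{d}N_{r}^{j}$ is an $L^{2}$-bounded martingale vanishing at $s=\tau$, hence of zero expectation. The same bounds, the boundedness of $F$, and the at most linear growth in $x$ of $\varPhi$ (from its bounded first derivatives) against the finite moments of $X_{t}$ under $\mathbb{P}_{a;b}^{\xi,\tau}$ ensure that both expectations in \eqref{Fey-01} are finite. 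Taking expectations in the last display then yields \eqref{Fey-01}. This is the derivation underlying the classical Feynman--Kac formula (cf. \citep{Friedman 1964}); the substance of Section 5 will be its less familiar \emph{forward} counterpart.
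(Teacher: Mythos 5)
Your proof is correct and is exactly the argument the paper has in mind: the paper omits the proof entirely, remarking only that the representation ``follows easily from an application of the It\^o formula'' with a reference to Freidlin, and your write-up supplies precisely that computation (the product rule for $Q_j^i(s)\varPhi^j(X_s,s)$, the cancellation of the $\varTheta$ terms via \eqref{bp-fey-01} and \eqref{gu-01} after relabelling indices, and the vanishing expectation of the stochastic integral). The integrability justifications you add --- the Gr\"onwall bound on $Q$, the bound $d\langle N^j\rangle_s\leq 2\lambda\|\nabla\varPhi\|_\infty^2\,ds$ from \eqref{u-c01}, and the linear growth of $\varPhi$ --- are all sound and fill in details the paper leaves implicit.
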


The representation follows easily from an application of the It\^o
formula, see for example Freidlin \citep{Freidlin1985}. As a consequence,
we have the following 
\begin{lem}
\label{lem5.2} Under the assumptions on $a$ and $b$, and suppose
$c(x,t)$ is bounded and continuous. Then 
\begin{equation}
\frac{\varSigma_{a;b,c}^{\star}(\xi,\tau;x,t)}{p_{a;b}(\tau,\xi,t,x)}=\mathbb{P}_{a;b}^{\xi,\tau\rightarrow x,t}\left[\textrm{e}^{\int_{\tau}^{t}c(X_{s},s)\textrm{d}s}\right]\label{rep-g1}
\end{equation}
for $x,\xi\in\mathbb{R}^{d}$ and $t>\tau\geq0$. 
\end{lem}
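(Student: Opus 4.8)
The plan is to read off (\ref{rep-g1}) from the Feynman--Kac representation of Lemma \ref{lem5.1} combined with the disintegration formula (\ref{cd-s1}). Since $\varSigma_{a;b,c}^{\star}(\xi,\tau;\cdot,t)$ has a Dirac-type singularity as $\tau\uparrow t$, I would first integrate it against a test function to produce a genuine bounded solution of a backward parabolic problem, to which Lemma \ref{lem5.1} directly applies. Concretely, fix $t>\tau\geq0$ and $\varphi\in C_b(\mathbb{R}^d)$ and set
\[
v(\xi,s)=\int_{\mathbb{R}^d}\varSigma_{a;b,c}^{\star}(\xi,s;y,t)\,\varphi(y)\,\textrm{d}y,\qquad 0\leq s<t .
\]
By the properties of $\varSigma_{a;b,c}^{\star}$ recorded above, $v$ solves $(\mathscr{L}_{a;b,c}+\partial_s)v=0$ on $\mathbb{R}^d\times[0,t)$ and $v(\cdot,s)\to\varphi$ as $s\uparrow t$; by the standard Gaussian bounds for fundamental solutions (cf. \citep{Friedman 1964}), $v$ is bounded on $\mathbb{R}^d\times[0,t)$ and is $C^{2,1}$ with bounded first spatial derivatives on $\mathbb{R}^d\times[0,t']$ for every $t'<t$. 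Rewriting the equation as $(\mathscr{L}_{a;b}+\partial_s)v-\varTheta v+F=0$ with $\varTheta=-c$ and $F=0$, the gauge (\ref{gu-01}) is $Q(\tau,\psi;s)=\textrm{e}^{\int_\tau^s c(\psi(r),r)\textrm{d}r}$, so Lemma \ref{lem5.1} applied on $[\tau,t']$ followed by $t'\uparrow t$ yields
\[
v(\xi,\tau)=\mathbb{P}_{a;b}^{\xi,\tau}\left[\textrm{e}^{\int_\tau^t c(X_s,s)\textrm{d}s}\varphi(X_t)\right].
\]

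Next I would expand the right-hand side using (\ref{cd-s1}): since $\varphi(X_t)=\varphi(y)$ on $\{X_t=y\}$,
\[
\mathbb{P}_{a;b}^{\xi,\tau}\left[\textrm{e}^{\int_\tau^t c(X_s,s)\textrm{d}s}\varphi(X_t)\right]=\int_{\mathbb{R}^d}\varphi(y)\,\mathbb{P}_{a;b}^{\xi,\tau\rightarrow y,t}\left[\textrm{e}^{\int_\tau^t c(X_s,s)\textrm{d}s}\right]p_{a;b}(\tau,\xi;t,y)\,\textrm{d}y .
\]
Since also $v(\xi,\tau)=\int_{\mathbb{R}^d}\varSigma_{a;b,c}^{\star}(\xi,\tau;y,t)\varphi(y)\,\textrm{d}y$, letting $\varphi$ range over $C_b(\mathbb{R}^d)$ forces
\[
\varSigma_{a;b,c}^{\star}(\xi,\tau;y,t)=\mathbb{P}_{a;b}^{\xi,\tau\rightarrow y,t}\left[\textrm{e}^{\int_\tau^t c(X_s,s)\textrm{d}s}\right]p_{a;b}(\tau,\xi;t,y)
\]
for Lebesgue-almost every $y$; since $\varSigma_{a;b,c}^{\star}(\xi,\tau;\cdot,t)$ and $p_{a;b}(\tau,\xi;t,\cdot)$ are continuous and $p_{a;b}>0$, the quotient is continuous in $y$ and the identity holds for every $y$. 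Dividing by $p_{a;b}(\tau,\xi;t,y)$ and relabelling $y$ as $x$ gives (\ref{rep-g1}).

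The one step that will need care is the passage of the Feynman--Kac formula up to the terminal time $s=t$: Lemma \ref{lem5.1} is applicable only on intervals $[\tau,t']$ with $t'<t$, where $v$ has the required smoothness and bounded derivatives, so one must run it there and then let $t'\uparrow t$ by dominated convergence, using the uniform bound on $v$ coming from the Gaussian estimates and the continuity of $v$ up to $s=t$ (which is precisely the fundamental-solution property for bounded continuous terminal data, together with continuity of the paths of $X$). Removing the test function $\varphi$ and upgrading the almost-everywhere identity to a pointwise one are then routine, relying only on the continuity and strict positivity of $p_{a;b}$ and the continuity of $\varSigma_{a;b,c}^{\star}$ already noted in the excerpt.
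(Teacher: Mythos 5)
Your proposal is correct and follows essentially the same route as the paper: apply the Feynman--Kac formula of Lemma \ref{lem5.1} to a bounded solution of the backward problem $(\mathscr{L}_{a;b,c}+\partial_{t})f=0$, disintegrate the resulting expectation over the terminal value via the pinned measures, and identify the kernel with $\varSigma_{a;b,c}^{\star}$. The additional care you take with the limit $t'\uparrow t$ and with upgrading the almost-everywhere identity to a pointwise one merely makes explicit steps the paper leaves implicit.
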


\begin{proof}
Let $T>0$ be arbitrary but fixed, and $f$ be a $C^{2,1}$ bounded
solution to the backward parabolic equation: 
\[
\left(\mathscr{L}_{a;b,c}+\partial_{t}\right)f=0
\]
such that $f(x,t)\rightarrow f_{0}(x)$ as $t\uparrow T$. The equation
may be rewritten as the following: 
\[
\left(\mathscr{L}_{a;b}+\partial_{t}\right)f(x,t)+c(x,t)f(x,t)=0
\]
so that, according Lemma \ref{lem5.1} 
\begin{equation}
f(\xi,\tau)=\mathbb{P}_{a;b}^{\xi,\tau}\left[C(\tau,X;T)f(X_{T},T)\right]\label{rep-f01}
\end{equation}
for $0\leq\tau<T$, where $C$ solves the ordinary equation 
\[
\frac{\textrm{d}}{\textrm{d}t}C(\tau,X;t)-C(\tau,X;t)c(X_{t},t)=0,\quad C(\tau,X;\tau)=1
\]
which has the unique solution 
\[
C(\tau,X;t)=\textrm{e}^{\int_{\tau}^{t}c(X_{s},s)\textrm{d}s}
\]
for $t\geq\tau$. By taking condition that $X_{T}=x$ we may rewrite
the equality (\ref{rep-f01}) as the following 
\[
f(\xi,\tau)=\int_{\mathbb{R}^{d}}p_{a;b}(\tau,\xi;T,x)\mathbb{P}_{a;b}^{\xi,\tau}\left[\left.\textrm{e}^{\int_{\tau}^{T}c(X_{s},s)\textrm{d}s}\right|X_{T}=x\right]f(x,T)\textrm{d}x,
\]
which by definition yields 
\[
\varSigma_{a;b,c}^{\star}(\xi,\tau;x,T)=p_{a;b}(\tau,\xi;T,x)\mathbb{P}_{a;b}^{\xi,\tau\rightarrow x,T}\left[\textrm{e}^{\int_{\tau}^{T}c(X_{s},s)\textrm{d}s}\right].
\]
The proof is complete. 
\end{proof}
\begin{lem}
\label{lem2.6}Let $D:0=t_{0}<t_{1}<\ldots<t_{k}<t_{k+1}=T$. Then
\[
\mathbb{P}_{a^{T};-b^{T}}^{\eta,0\rightarrow\xi,T}\circ\tau_{T}\left[X_{t_{1}}\in\textrm{d}x_{k},\cdots,X_{t_{k}}\in\textrm{d}x_{1}\right]=H_{D}(x_{0},\cdots,x_{k+1})\mathbb{P}_{a;b}^{\xi,0\rightarrow\eta,T}\left[X_{t_{1}}\in\textrm{d}x_{1},\cdots,X_{t_{k}}\in\textrm{d}x_{k}\right]
\]
where 
\[
H_{D}(x_{0},\cdots,x_{k+1})=\frac{1}{\mathbb{P}_{a;b}^{\xi,0\rightarrow\eta,T}\left[\textrm{e}^{\int_{0}^{T}\nabla\cdot b(X_{s},s)\textrm{d}s}\right]}\prod_{i=1}^{k+1}\mathbb{P}_{a;b}^{x_{i-1},t_{i-1}\rightarrow x_{i},t_{i}}\left[\textrm{e}^{\int_{t_{i-1}}^{t_{i}}\nabla\cdot b(X_{s},s)\textrm{d}s}\right].
\]
and $x_{0}=\xi$ and $x_{k+1}=\eta$. 
\end{lem}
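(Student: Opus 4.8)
The plan is to prove the identity of the two finite-dimensional marginals by computing their Radon--Nikodym derivative directly from the explicit densities already on hand, and then recognising each elementary quotient of kernels as a diffusion-bridge expectation via Lemma~\ref{lem5.2}. By Lemma~\ref{lem2.3}, the marginal of $\mathbb{P}_{a^{T};-b^{T}}^{\eta,0\rightarrow\xi,T}\circ\tau_{T}$ is absolutely continuous with respect to Lebesgue measure on $\mathbb{R}^{d\times k}$ with the density displayed in \eqref{M-DE01}, written in terms of the backward fundamental solution $\varSigma_{a;b,\nabla\cdot b}^{\star}$; by \eqref{c-d--2as}, the marginal of $\mathbb{P}_{a;b}^{\xi,0\rightarrow\eta,T}$ has a density of exactly the same shape but built from the transition densities $p_{a;b}$, with the same boundary conventions $x_{0}=\xi$, $x_{k+1}=\eta$ and $t_{0}=0$, $t_{k+1}=T$. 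Since $a$ is uniformly elliptic and $b,\nabla\cdot b$ are bounded, the densities $p_{a;b}$ are strictly positive, and by Lemma~\ref{lem5.2} so are the $\varSigma_{a;b,\nabla\cdot b}^{\star}$; hence the quotient of the two densities is a well-defined, strictly positive function of $(x_{1},\dots,x_{k})$, and it is by definition the sought Radon--Nikodym factor. It then remains only to match this quotient with $H_{D}$.

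To do so I would apply Lemma~\ref{lem5.2} with $c=\nabla\cdot b$ --- which is bounded and continuous under the hypotheses of Theorem~\ref{thm2.2} --- to each of the space-time pairs occurring in \eqref{M-DE01} and \eqref{c-d--2as}. This replaces every interior factor $\varSigma_{a;b,\nabla\cdot b}^{\star}(x_{i-1},t_{i-1};x_{i},t_{i})/p_{a;b}(t_{i-1},x_{i-1};t_{i},x_{i})$ by the bridge expectation $\mathbb{P}_{a;b}^{x_{i-1},t_{i-1}\rightarrow x_{i},t_{i}}\bigl[\textrm{e}^{\int_{t_{i-1}}^{t_{i}}\nabla\cdot b(X_{s},s)\textrm{d}s}\bigr]$ for $i=1,\dots,k$, the terminal factor $\varSigma_{a;b,\nabla\cdot b}^{\star}(x_{k},t_{k};\eta,T)/p_{a;b}(t_{k},x_{k};T,\eta)$ by the same expression with $i=k+1$ (using $x_{k+1}=\eta$, $t_{k+1}=T$), and the common normalising factor $\varSigma_{a;b,\nabla\cdot b}^{\star}(\xi,0;\eta,T)/p_{a;b}(0,\xi;T,\eta)$ by $\mathbb{P}_{a;b}^{\xi,0\rightarrow\eta,T}\bigl[\textrm{e}^{\int_{0}^{T}\nabla\cdot b(X_{s},s)\textrm{d}s}\bigr]$; as the normalising factor sits in the denominator of both densities, it survives in the quotient only through its reciprocal. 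Collecting the $k$ interior terms together with the terminal one produces $\prod_{i=1}^{k+1}\mathbb{P}_{a;b}^{x_{i-1},t_{i-1}\rightarrow x_{i},t_{i}}[\cdots]$ in the numerator, with the reciprocal of the normalising expectation in front, which is precisely $H_{D}(x_{0},\dots,x_{k+1})$.

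Finally, since the two finite-dimensional marginals are mutually absolutely continuous on $\mathbb{R}^{d\times k}$ with this explicit density ratio, the asserted identity of measures follows immediately. The argument is purely computational once Lemmas~\ref{lem2.3} and~\ref{lem5.2} are available; the only point requiring care is the index bookkeeping forced by the time reversal $\tau_{T}$ --- that is, keeping the coordinate recorded at time $t_{j}$ under $\mathbb{P}_{a^{T};-b^{T}}^{\eta,0\rightarrow\xi,T}\circ\tau_{T}$ aligned with the coordinate at time $t_{j}$ under $\mathbb{P}_{a;b}^{\xi,0\rightarrow\eta,T}$, and checking that each $\varSigma^{\star}/p$ factor lives over the correct subinterval $[t_{i-1},t_{i}]$ so that Lemma~\ref{lem5.2} applies verbatim. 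I expect no analytic obstacle beyond the strict positivity of the kernels, which is already guaranteed by uniform ellipticity and the boundedness of $b$ and $\nabla\cdot b$.
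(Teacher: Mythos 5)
Your argument is correct and is precisely the one the paper intends: the paper states Lemma \ref{lem2.6} without an explicit proof, but it is set up as an immediate consequence of Lemma \ref{lem2.3} (the density \eqref{M-DE01}), the marginal formula \eqref{c-d--2as}, and Lemma \ref{lem5.2} applied with $c=\nabla\cdot b$ to each quotient $\varSigma_{a;b,\nabla\cdot b}^{\star}/p_{a;b}$, which is exactly how you proceed. Your ratio computation and the identification of the $k+1$ bridge factors plus the reciprocal normaliser with $H_{D}$ check out.
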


We are now in a position to prove the duality of the conditional laws. 
\begin{proof}
(\emph{of Theorem} \ref{thm2.2}) From the previous lemma we can see
that the probability measure $\mathbb{P}_{a^{T};-b^{T}}^{\eta,0\rightarrow\xi,T}\circ\tau_{T}$
has Markov property, so we only compute the one-dimensional marginal
distribution of $X_{t}$ under $\mathbb{P}_{a^{T};-b^{T}}^{\eta,0\rightarrow\xi,T}\circ\tau_{T}$.
Suppose $f$ is bounded and Borel measurable, let us compute 
\[
I=\mathbb{P}_{a^{T};-b^{T}}^{\eta,0\rightarrow\xi,T}\circ\tau_{T}\left[f(X_{t})\right]=\mathbb{P}_{a^{T};-b^{T}}^{\eta,0\rightarrow\xi,T}\left[f(X_{T-t})\right].
\]
According to Lemma \ref{lem2.6} and the fact that 
\[
\left.\frac{\textrm{d}\mathbb{P}_{a;b}^{\xi,0\rightarrow\eta,T}}{\textrm{d}\mathbb{P}_{a;b}^{\xi,0}}\right|_{\mathcal{F}_{t}^{0}}=\frac{p_{a;b}(t,X_{t};T,\eta)}{p_{a;b}(0,\xi;T,\eta)}\quad\textrm{ for }0\leq t<T,
\]
we may write 
\begin{align*}
I & =\frac{1}{\mathbb{P}_{a;b}^{\xi,0\rightarrow\eta,T}\left[\textrm{e}^{\int_{0}^{T}\nabla\cdot b(X_{s},s)\textrm{d}s}\right]}\\
 & \cdot\mathbb{P}_{a;b}^{\xi,0\rightarrow\eta,T}\left[\lim_{\varepsilon\downarrow0}\frac{f(X_{t})\textrm{e}^{\int_{0}^{t}\nabla\cdot b(X_{s},s)\textrm{d}s}}{p_{a;b}(t,X_{t};T,\eta)}\mathbb{P}_{a;b}^{X_{t},t}\left[p_{a;b}(T-\varepsilon,X_{T-\varepsilon};T,\eta)\textrm{e}^{\int_{t}^{T-\varepsilon}\nabla\cdot b(X_{s},s)\textrm{d}s}\right]\right]\\
 & =\frac{1}{\mathbb{P}_{a;b}^{\xi,0\rightarrow\eta,T}\left[\textrm{e}^{\int_{0}^{T}\nabla\cdot b(X_{s},s)\textrm{d}s}\right]}\\
 & \cdot\lim_{\varepsilon\downarrow0}\mathbb{P}_{a;b}^{\xi,0}\left[\mathbb{P}_{a;b}^{X_{t},t}\left(\frac{p_{a;b}(T-\varepsilon,X_{T-\varepsilon};T,\eta)}{p_{a;b}(0,\xi;T,\eta)}\textrm{e}^{\int_{t}^{T-\varepsilon}\nabla\cdot b(X_{s},s)\textrm{d}s}\right)\textrm{e}^{\int_{0}^{t}\nabla\cdot b(X_{s},s)\textrm{d}s}f(X_{t})\right]\\
 & =\frac{1}{\mathbb{P}_{a;b}^{\xi,0\rightarrow\eta,T}\left[\textrm{e}^{\int_{0}^{T}\nabla\cdot b(X_{s},s)\textrm{d}s}\right]}\\
 & \cdot\lim_{\varepsilon\downarrow0}\mathbb{P}_{a;b}^{\xi,0}\left[\frac{p_{a;b}(T-\varepsilon,X_{T-\varepsilon};T,\eta)}{p_{a;b}(0,\xi;T,\eta)}\textrm{e}^{\int_{0}^{T-\varepsilon}\nabla\cdot b(X_{s},s)\textrm{d}s}f(X_{t})\right]\\
 & =\mathbb{P}_{a;b}^{\xi,0\rightarrow\eta,T}\left\{ \frac{\textrm{e}^{\int_{0}^{T}\nabla\cdot b(X_{s},s)\textrm{d}s}}{\mathbb{P}_{a;b}^{\xi,0\rightarrow\eta,T}\left[\textrm{e}^{\int_{0}^{T}\nabla\cdot b(X_{s},s)\textrm{d}s}\right]}f(X_{t})\right\} 
\end{align*}
where the third equality follows from the Markov property (the general
form), which in turn implies that 
\[
\frac{\textrm{d}\mathbb{P}_{a^{T};-b^{T}}^{\eta,0\rightarrow\xi,T}\circ\tau_{T}}{\textrm{d}\mathbb{P}_{a;b}^{\xi,0\rightarrow\eta,T}}=\frac{\textrm{e}^{\int_{0}^{T}\nabla\cdot b(X_{s},s)\textrm{d}s}}{\mathbb{P}_{a;b}^{\xi,0\rightarrow\eta,T}\left[\textrm{e}^{\int_{0}^{T}\nabla\cdot b(X_{s},s)\textrm{d}s}\right]}
\]
and the proof is complete. 
\end{proof}

\section{Functional integral representations for parabolic equations}

In this section we establish functional integral representation for
solutions to the initial value problem of the following system of
linear parabolic equations 
\begin{equation}
\left(\mathscr{L}_{a;b,c}-\partial_{t}\right)\varPsi^{i}+\sum_{j=1}^{n}q_{j}^{i}\varPsi^{j}+f^{i}=0\label{01-t}
\end{equation}
subject to the initial value $\varPsi^{i}(x,0)=\varPsi_{0}^{i}(x)$,
where $i=1,\ldots,n$. The multiplier $q(x,t)=(q^{ij}(x,t))$ is an
$n\times n$ square-matrix valued bounded and continuous function. 
\begin{thm}
\label{thm3.1}For every $T>0$ and $\psi\in C([0,T];\mathbb{R}^{d})$,
$t\mapsto A(\psi,T;t)$ denotes the solution to the ordinary linear
differential equations 
\begin{equation}
\frac{\textrm{d}}{\textrm{d}t}A_{j}^{i}(\psi,T;t)=-A_{k}^{i}(\psi,T;t)q_{j}^{k}(\psi(t),t)-A_{j}^{i}(\psi,T;t)c(\psi(t),t)\label{A-1}
\end{equation}
and 
\begin{equation}
A_{j}^{i}(\psi,T;T)=\delta_{ij}.\label{A-2}
\end{equation}
Then the following functional integral representation holds 
\begin{align*}
\varPsi^{i}(\xi,T) & =\int_{\mathbb{R}^{d}}\mathbb{P}_{a;-b}^{\eta,0\rightarrow\xi,T}\left[\textrm{e}^{-\int_{0}^{T}\nabla\cdot b(X_{s},s)\textrm{d}s}A_{j}^{i}(X,T;0)\varPsi_{0}^{j}(\eta)\right]p_{a;-b}(0,\eta;T,\xi)\textrm{d}\eta\\
 & +\int_{0}^{T}\int_{\mathbb{R}^{d}}\mathbb{P}_{a;-b}^{\eta,0\rightarrow\xi,T}\left[\textrm{e}^{-\int_{0}^{T}\nabla\cdot b(X_{s},s)\textrm{d}s}A_{j}^{i}(X,T;s)f^{j}(X_{s},s)\right]p_{a;-b}(0,\eta;T,\xi)\textrm{d}\eta\textrm{d}s
\end{align*}
for any $T>0$ and $\xi\in\mathbb{R}^{d}$. 
\end{thm}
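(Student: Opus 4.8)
The plan is to combine the backward Feynman–Kac formula (Lemma \ref{lem5.1}) with the conditional law duality (Theorem \ref{thm2.2}) in order to convert a representation along the backward flow into one along forward pinned diffusions. First I would fix $T>0$ and introduce the time-reversed data: set $\widetilde{a}=a^{T}$, $\widetilde{b}=-b^{T}$, $\widetilde{q}(x,t)=q^{T}(x,t)$, $\widetilde{c}(x,t)=c^{T}(x,t)$, $\widetilde{f}^{i}(x,t)=f^{i}(x,(T-t)^{+})$, and $\widetilde{\varPsi}^{i}(x,t)=\varPsi^{i}(x,(T-t)^{+})$. Because $\mathscr{L}_{a;b,c}-\partial_{t}$ becomes $\mathscr{L}_{a^{T};b^{T},c^{T}}+\partial_{t}$ under $t\mapsto T-t$ and $\mathscr{L}_{a^{T};b^{T},c^{T}}=\mathscr{L}_{a^{T};-b^{T},c^{T}-\nabla\cdot b^{T}}^{\star}$ has adjoint $\mathscr{L}_{a^{T};-b^{T},c^{T}-\nabla\cdot b^{T}}$ — wait, more carefully: I want to rewrite equation \eqref{01-t} for $\varPsi$, which is a \emph{forward} (initial-value) problem, as a \emph{backward} problem for $\widetilde{\varPsi}$ with respect to the operator $\mathscr{L}_{\widetilde{a};\widetilde{b}}$ plus a zeroth-order term. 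A direct computation of how $\mathscr{L}_{a;b,c}$ transforms: since $b\cdot\nabla$ is first order and under $x\mapsto x$, $t\mapsto T-t$ the spatial operator is unchanged while $\partial_{t}\mapsto-\partial_{t}$, equation \eqref{01-t} becomes $(\mathscr{L}_{a^{T};b^{T}}+\partial_{t})\widetilde{\varPsi}^{i}+c^{T}\widetilde{\varPsi}^{i}+\sum_{j}\widetilde{q}_{j}^{i}\widetilde{\varPsi}^{j}+\widetilde{f}^{i}=0$. This is exactly of the form \eqref{bp-fey-01} with drift field $b^{T}$ (not $-b^{T}$), zeroth-order matrix $\varTheta_{j}^{i}=-\widetilde{q}_{j}^{i}-c^{T}\delta_{j}^{i}$, and source $\widetilde{f}^{i}$.

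Next I would apply Lemma \ref{lem5.1} to $\widetilde{\varPsi}$ with $\tau=0$ and terminal time $T$, using the diffusion measure $\mathbb{P}_{a^{T};b^{T}}^{\eta,0}$. The associated ODE \eqref{gu-01} for the gauge $Q$ reads $\frac{d}{dt}Q_{j}^{i}+Q_{k}^{i}(-\widetilde{q}_{j}^{k}-c^{T}\delta_{j}^{k})=0$ with $Q(\cdot;0)=\mathrm{Id}$; comparing with \eqref{A-1}–\eqref{A-2} after the substitution $t\mapsto T-t$ shows that $Q_{j}^{i}(0,\psi;t)=A_{j}^{i}(\tau_{T}\psi,T;T-t)$ when $\psi$ is a path on $[0,T]$ — this identification of the two ODE solutions via time reversal is a routine uniqueness argument but must be spelled out. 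Lemma \ref{lem5.1} then gives, for $\widetilde{\varPsi}^{i}(\xi,0)=\varPsi^{i}(\xi,T)$,
\[
\varPsi^{i}(\xi,T)=\mathbb{P}_{a^{T};b^{T}}^{\xi,0}\Bigl[Q_{j}^{i}(0,X;T)\varPsi_{0}^{j}(X_{T})\Bigr]+\mathbb{P}_{a^{T};b^{T}}^{\xi,0}\Bigl[\int_{0}^{T}Q_{j}^{i}(0,X;s)\widetilde{f}^{j}(X_{s},s)\,\mathrm{d}s\Bigr].
\]
The point here is that $\widetilde{\varPsi}(x,T)=\varPsi(x,0)=\varPsi_{0}(x)$, so the terminal value feeds in the initial data.

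Then I would disintegrate each expectation over $X_{T}$ using the splitting formula \eqref{cd-s1}, producing $\int_{\mathbb{R}^{d}}\mathbb{P}_{a^{T};b^{T}}^{\xi,0\rightarrow\eta,T}[\,\cdots\,]\,p_{a^{T};b^{T}}(0,\xi;T,\eta)\,\mathrm{d}\eta$ — but note $b^{T}$ is not divergence-free, so I must convert $\mathbb{P}_{a^{T};b^{T}}^{\xi,0\rightarrow\eta,T}$ to the time-reversed measure on paths. This is where Theorem \ref{thm2.2} enters, applied with the pair $(a^{T},b^{T})$ in place of $(a,b)$: the theorem states $\mathbb{P}_{(a^{T})^{T};-(b^{T})^{T}}^{\eta,0\rightarrow\xi,T}\circ\tau_{T}$ is absolutely continuous with respect to $\mathbb{P}_{a^{T};b^{T}}^{\xi,0\rightarrow\eta,T}$, with the explicit exponential-of-divergence Radon–Nikodym density \eqref{Dual-D1}. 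Since $(a^{T})^{T}=a$ and $(b^{T})^{T}=b$ on $[0,T]$, the left side is $\mathbb{P}_{a;-b}^{\eta,0\rightarrow\xi,T}\circ\tau_{T}$. So I rewrite the $\mathbb{P}_{a^{T};b^{T}}^{\xi,0\rightarrow\eta,T}$-expectation as a $\mathbb{P}_{a;-b}^{\eta,0\rightarrow\xi,T}\circ\tau_{T}$-expectation against the reciprocal density; the normalizing constant $\mathbb{P}_{a^{T};b^{T}}^{\xi,0\rightarrow\eta,T}[\mathrm{e}^{\int_{0}^{T}\nabla\cdot b^{T}}]$ multiplied by the density $p_{a^{T};b^{T}}(0,\xi;T,\eta)$ collapses — via Lemma \ref{lem5.2} relating $\varSigma^{\star}$ to the pinned expectation of $\mathrm{e}^{\int c}$, together with the fundamental-solution identities \eqref{duality-31}–\eqref{duality-34} — to $p_{a;-b}(0,\eta;T,\xi)$. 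Finally I push the time reversal $\tau_{T}$ through: $X_{s}$ under $\mathbb{P}_{a;-b}^{\eta,0\rightarrow\xi,T}\circ\tau_{T}$ is $X_{T-s}$ under $\mathbb{P}_{a;-b}^{\eta,0\rightarrow\xi,T}$, the factor $\mathrm{e}^{\int_{0}^{T}\nabla\cdot b^{T}(X_{s},s)\mathrm{d}s}$ in the reciprocal density becomes $\mathrm{e}^{\int_{0}^{T}\nabla\cdot b(X_{s},s)\mathrm{d}s}$ after reversal, and combined with the $\mathrm{e}^{-\int_{0}^{T}\nabla\cdot b}$ appearing from $c^{T}-\nabla\cdot b^{T}$ bookkeeping I recover exactly the $\mathrm{e}^{-\int_{0}^{T}\nabla\cdot b(X_{s},s)\mathrm{d}s}$ weight in the statement, while $Q_{j}^{i}(0,X;T-\cdot)$ becomes $A_{j}^{i}(X,T;\cdot)$ and $\widetilde{f}$ becomes $f$, and the $s$-integral in the source term has its orientation flipped by the substitution $s\mapsto T-s$.

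I expect the main obstacle to be the bookkeeping of the several exponential weights and the direction conventions: one must track simultaneously (i) the $c^{T}$ term that Lemma \ref{lem5.2} converts into $\mathrm{e}^{\int c}$, (ii) the $-\nabla\cdot b^{T}$ shift coming from $\mathscr{L}_{a^{T};b^{T}}^{\star}=\mathscr{L}_{a^{T};-b^{T},-\nabla\cdot b^{T}}$ which is hidden inside the relation $p_{a^{T};b^{T}}=\varSigma_{a^{T};-b^{T},-\nabla\cdot b^{T}}$, and (iii) the $\mathrm{e}^{\int\nabla\cdot b^{T}}$ from the duality density \eqref{Dual-D1}, and verify that after time reversal these combine to the single clean factor $\mathrm{e}^{-\int_{0}^{T}\nabla\cdot b(X_{s},s)\mathrm{d}s}$. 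A secondary technical point is justifying the interchange of the $\mathrm{d}\eta$-integral (over all of $\mathbb{R}^{d}$) with the expectations and with the $\varepsilon\downarrow0$-type limits used in the proof of Theorem \ref{thm2.2}; this is handled by the uniform ellipticity of $a$, the boundedness of $b$, $\nabla\cdot b$, $c$, $q$, and the standard Gaussian-type upper and lower bounds on $p_{a;b}$, together with the assumed boundedness of $\varPsi_{0}$, $f$ and of the first derivatives of $\varPsi$, so that Fubini and dominated convergence apply throughout.
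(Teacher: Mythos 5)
Your proposal follows essentially the same route as the paper's proof: time-reverse the system to put it in the form \eqref{bp-fey-01}, apply Lemma \ref{lem5.1} under $\mathbb{P}_{a^{T};b^{T}}^{\xi,0}$, disintegrate via \eqref{cd-s1}, and then use Theorem \ref{thm2.2} together with Lemma \ref{lem5.2} and the fundamental-solution identities to convert everything to the forward pinned measures $\mathbb{P}_{a;-b}^{\eta,0\rightarrow\xi,T}$, identifying $A$ with the time-reversed gauge $Q$. The only (immaterial) difference is that you invert the Radon--Nikodym derivative of Theorem \ref{thm2.2} and absorb the normalizer $\mathbb{P}_{a^{T};b^{T}}^{\xi,0\rightarrow\eta,T}[\mathrm{e}^{\int_{0}^{T}\nabla\cdot b^{T}}]$ into $p_{a^{T};b^{T}}(0,\xi;T,\eta)$ via Lemma \ref{lem5.2}, whereas the paper writes the derivative in the other direction so that the normalizer cancels directly; both yield the same cancellation.
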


\begin{proof}
Let $T>0$ be arbitrary but fixed in the discussion below. Let $\varPhi(x,t)=\varPsi(x,(T-t)^{+})$.
Then 
\begin{equation}
\left(\mathscr{L}_{a^{T};b^{T}}+\partial_{t}\right)\varPhi^{i}-\sum_{j=1}^{n}\varTheta_{j}^{i}\varPsi^{j}+F^{i}=0\quad\textrm{ in }\mathbb{R}^{d}\times[0,T],\label{F-BE-01}
\end{equation}
where $\varTheta_{j}^{i}(x,t)=-q_{j}^{i}(x,(T-t)^{+})-c(x,(T-t)^{+})\delta_{ij}$
and $F^{i}(x,t)=f^{i}(x,(T-t)^{+})$. By Lemma \ref{lem5.1} 
\begin{equation}
\varPsi^{i}(\xi,T)=\mathbb{P}_{a^{T};b^{T}}^{\xi,0}\left[Q_{j}^{i}(X,T;T)\varPsi_{0}^{j}(X_{T})\right]+\int_{0}^{T}\mathbb{P}_{a^{T};b^{T}}^{\xi,0}\left[Q_{j}^{i}(X,T;s)f^{j}(X_{s},T-s)\right]\textrm{d}s\label{Fey-01-1}
\end{equation}
where $i=1,\ldots,n$, $\xi\in\mathbb{R}^{d}$, and $Q_{j}^{i}(\psi,T;t)$
are the solutions to 
\begin{equation}
\frac{\textrm{d}}{\textrm{d}t}Q_{j}^{i}(\psi,T;t)+Q_{k}^{i}(\psi,T;t)\varTheta_{j}^{k}(\psi(t),t)=0,\quad Q_{j}^{i}(\psi,T;0)=\delta_{j}^{i}.\label{gu-01-1}
\end{equation}
We next rewrite the expectations using conditional laws to obtain
that 
\begin{align*}
\varPsi^{i}(\xi,T) & =\int_{\mathbb{R}^{d}}\mathbb{P}_{a^{T};b^{T}}^{\xi,0\rightarrow\eta,T}\left[Q_{j}^{i}(X,T;T)\right]\varPsi_{0}^{j}(\eta)p_{a^{T};b^{T}}(0,\xi;T,\eta)\textrm{d}\eta\\
 & +\int_{\mathbb{R}^{d}}\int_{0}^{T}\mathbb{P}_{a^{T};b^{T}}^{\xi,0\rightarrow\eta,T}\left[Q_{j}^{i}(X,T;s)f^{j}(X_{s},T-s)\right]p_{a^{T};b^{T}}(0,\xi;T,\eta)\textrm{d}\eta\textrm{d}s\\
 & =\int_{\mathbb{R}^{d}}\mathbb{P}_{a^{T};b^{T}}^{\xi,0\rightarrow\eta,T}\left[Q_{j}^{i}(X,T;T)\right]\varPsi_{0}^{j}(\eta)p_{a^{T};b^{T}}(0,\xi;T,\eta)\textrm{d}\eta\\
 & +\int_{0}^{T}\int_{\mathbb{R}^{d}}\mathbb{P}_{a^{T};b^{T}}^{\xi,0\rightarrow\eta,T}\left[Q_{j}^{i}(X,T;T-s)f^{j}(X_{s},s)\right]p_{a^{T};b^{T}}(0,\xi;T,\eta)\textrm{d}\eta\textrm{d}s.
\end{align*}
While, according to 
\begin{align*}
p_{a^{T};b^{T}}(0,\xi;T,\eta) & =\varSigma_{a;-b,-\nabla\cdot b}^{\star}(\eta,0;\xi,T)\\
 & =p_{a;-b}(0,\eta;T,\xi)\mathbb{P}_{a;-b}^{\eta,0\rightarrow\xi,T}\left[\textrm{e}^{-\int_{0}^{T}\nabla\cdot b(X_{s},s)\textrm{d}s}\right]
\end{align*}
so by substituting this into the previous representation we obtain
\begin{align*}
\varPsi^{i}(\xi,T) & =\int_{\mathbb{R}^{d}}\mathbb{P}_{a^{T};b^{T}}^{\xi,0\rightarrow\eta,T}\left[Q_{j}^{i}(X,T;T)\varPsi_{0}^{j}(\eta)\right]\mathbb{P}_{a;-b}^{\eta,0\rightarrow\xi,T}\left[\textrm{e}^{-\int_{0}^{T}\nabla\cdot b(X_{s},s)\textrm{d}s}\right]p_{a;-b}(0,\eta;T,\xi)\textrm{d}\eta\\
 & +\int_{0}^{T}\int_{\mathbb{R}^{d}}\mathbb{P}_{a^{T};b^{T}}^{\xi,0\rightarrow\eta,T}\left[Q_{j}^{i}(X,T;s)f^{j}(X_{s},T-s)\right]\\
 & \quad\cdot\mathbb{P}_{a;-b}^{\eta,0\rightarrow\xi,T}\left[\textrm{e}^{-\int_{0}^{T}\nabla\cdot b(X_{s},s)\textrm{d}s}\right]p_{a;-b}(0,\eta;T,\xi)\textrm{d}\eta\textrm{d}s.
\end{align*}
Finally by using the conditional law duality, Theorem \ref{thm2.2},
\begin{equation}
\frac{\textrm{d}\mathbb{P}_{a^{T};b^{T}}^{\xi,0\rightarrow\eta,T}\circ\tau_{T}}{\textrm{d}\mathbb{P}_{a;-b}^{\eta,0\rightarrow\xi,T}}=\frac{\textrm{e}^{-\int_{0}^{T}\nabla\cdot b(X_{s},s)\textrm{d}s}}{\mathbb{P}_{a;-b}^{\eta,0\rightarrow\xi,T}\left[\textrm{e}^{-\int_{0}^{T}\nabla\cdot b(X_{s},s)\textrm{d}s}\right]},\label{Dual-D1-1}
\end{equation}
we therefore conclude that 
\begin{align*}
\varPsi^{i}(\xi,T) & =\int_{\mathbb{R}^{d}}\mathbb{P}_{a;-b}^{\eta,0\rightarrow\xi,T}\left[\textrm{e}^{-\int_{0}^{T}\nabla\cdot b(X_{s},s)\textrm{d}s}Q_{j}^{i}(X\circ\tau_{T},T;T)\varPsi_{0}^{j}(\eta)\right]p_{a;-b}(0,\eta;T,\xi)\textrm{d}\eta\\
 & +\int_{0}^{T}\int_{\mathbb{R}^{d}}\mathbb{P}_{a;-b}^{\eta,0\rightarrow\xi,T}\left[\textrm{e}^{-\int_{0}^{T}\nabla\cdot b(X_{s},s)\textrm{d}s}Q_{j}^{i}(X\circ\tau_{T},T;s)f^{j}(X_{T-s},T-s)\right]\\
 & \quad\cdot p_{a;-b}(0,\eta;T,\xi)\textrm{d}\eta\textrm{d}s\\
 & =\int_{\mathbb{R}^{d}}\mathbb{P}_{a;-b}^{\eta,0\rightarrow\xi,T}\left[\textrm{e}^{-\int_{0}^{T}\nabla\cdot b(X_{s},s)\textrm{d}s}Q_{j}^{i}(X\circ\tau_{T},T;T)\varPsi_{0}^{j}(\eta)\right]p_{a;-b}(0,\eta;T,\xi)\textrm{d}\eta\\
 & +\int_{0}^{T}\int_{\mathbb{R}^{d}}\mathbb{P}_{a;-b}^{\eta,0\rightarrow\xi,T}\left[\textrm{e}^{-\int_{0}^{T}\nabla\cdot b(X_{s},s)\textrm{d}s}Q_{j}^{i}(X\circ\tau_{T},T;T-s)f^{j}(X_{s},s)\right]\\
 & \quad\cdot p_{a;-b}(0,\eta;T,\xi)\textrm{d}\eta\textrm{d}s.
\end{align*}
It remains to verify $A_{j}^{i}(\psi,T;t)=Q_{j}^{i}(X\circ\tau_{T},T;T-t)$
(for $t\in[0,T]$) are the solutions to (\ref{A-1}, \ref{A-2}).
By definition $A_{j}^{i}(\psi,T;T)=\delta_{j}^{i}$. While by definition
\begin{align*}
Q_{j}^{i}(\psi\circ\tau_{T},T;T-t) & =\delta_{ij}-\int_{0}^{T-t}Q_{k}^{i}(\psi\circ\tau_{T},T;s)\varTheta_{j}^{k}(\psi(T-s),s)\textrm{d}s\\
 & =\delta_{ij}+\int_{T}^{t}Q_{k}^{i}(\psi\circ\tau_{T},T;T-s)\varTheta_{j}^{k}(\psi(s),T-s)\textrm{d}s.
\end{align*}
That is $A(\psi,T;t)$ solves the integral equation 
\[
A(\psi,T;t)=\delta_{ij}+\int_{T}^{t}A_{k}^{i}(\psi,T;s)\varTheta_{j}^{k}(\psi(s),T-s)\textrm{d}s
\]
which yields the conclusion. The proof is complete.
\end{proof}
\vskip0.5truecm

\emph{Acknowledgement}. This work is supported partially by the EPSRC
Centre for Doctoral Training in Mathematics of Random Systems: Analysis,
Modelling and Simulation (EP/S023925/1).


\begin{thebibliography}{99}
\bibitem{ChenFeldman2010} Chen, G.-Q. G. and Feldman, M. 2010 Global
solutions to shock reflection by large-angle wedges for potential
flow. \emph{Ann. of Math}. $\mathbf{171}$: 1019-1134.

\bibitem{ChenFeldman2018} Chen, G.-Q. G. and Feldman, M. 2018 \emph{The
mathematics of shock reflection-diffraction and von Neumann's conjectures}.
Ann. of Math. Studies. Princeton University Press.

\bibitem{Chorin 1973} Chorin, A. J. 1973 Numerical study of slightly
viscous flow. \emph{J. Fluid Mech.} 57, 785-796.

\bibitem{Cottet and Koumoutsakos 2000} Cottet, G.-H. and Koumoutsakos,
P. D.\emph{ }2000\emph{ Vortex methods - theory and practice}. Cambridge
University Press.

\bibitem{Dafermos2005} Dafermos 2005 \emph{Hyperbolic conservation
laws in continuum physics}. Second Edition. Springer

\bibitem{Dellacherie and Meyer Volume D} Dellacherie, C. and P.-A.
Meyer 1987\emph{ Probabiliti\'es et potentiel, Chapitres XII \`a
XVI}. Hermann.

\bibitem{Freidlin1985} Freidlin, M. 1985 \emph{Functional integration
and partial differential equations}. Annals of Math, Studies Number
109. Princeton University Press.

\bibitem{Friedman 1964} Friedman, A.\emph{ }1964\emph{ Partial differential
equations of parabolic type}. Prentice-Hall, Inc.

\bibitem{Goodman1987} Goodman, J. 1987 Convergence of the random
vortex method. \emph{Comm. Pure Appl. Math.} $\mathbf{40}$(2), 189-220.

\bibitem{Landau-LifshitzFluids} Landau, L. D. and Lifshitz, E. M.
1959 \emph{Fluid mechanics}. Pergamon Press.

\bibitem{LiQianXu2023} Li, J., Qian, Z. and Xu, M. 2023 Twin Brownian
particle method for the study of Oberbeck-Boussinesq fluid flows.
\emph{arXiv}.2303.17260

\bibitem{Lions1998} Lions, P.-L. 1996, 1998 \emph{Mathematical topics
in fluid mechanics. Vol. 1 Navier-Stokes equations, and Vol.2 Compressible
models}. Oxford Sci. Pub. Clarendon Press Oxford.

\bibitem{Long 1988} Long, D.-G. 1988 Convergence of the random vortex
method in two dimensions. \emph{J. Am. Math. Soc}. $\mathbf{1}$,
779-804.

\bibitem{Majda and Bertozzi 2002} Majda, A. J. and Bertozzi, A. L.
2002 \emph{Vorticity and incompressible flow}. Cambridge Texts in
Applied Math., Cambridge University Press.

\bibitem{QSZ2022} Qian Z., S\"uli E., and Zhang Y. 2022 Random vortex
dynamics via functional stochastic differential equations. \emph{Proc.
R. Soc.} $\mathbf{A478}$: 20220030. https://doi.org/10.1098/rspa.2022.0030

\bibitem{Stroock and Varadhan 1979} Stroock, D. W. and Varadhan,
S. R. S. 1979 \emph{Multidimensional diffusion processes}. Springer-Verlag.

\end{thebibliography}
\end{document}